\theoremstyle{plain}
\newtheorem{thm}{Theorem}
\newtheorem{prop}[thm]{Proposition}
\newtheorem{lem}[thm]{Lemma}
\newtheorem{cor}[thm]{Corollary}
\newtheorem{defn}[thm]{Definition}
\newcommand{\N}{\mathbb{N}}
\newcommand{\R}{\mathbb{R}}
\newcommand{\Z}{\mathbb{Z}}
\newcommand{\h}{\mathbb{H}}
\newcommand{\T}{\mathbb{T}}
\newcommand{\conv}{\operatorname{conv}}
\newcommand{\interior}{\operatorname{int}}
\newcommand{\bd}{\operatorname{bd}} 
\newcommand{\vol}{\operatorname{vol}}
\newcommand{\flt}{\operatorname{\phi}}
\newcommand{\orderO}{\mathcal{O}}
\newcommand{\orderOmega}{\Omega}
\newcommand{\orderTheta}{\Theta}
\newcommand{\cP}{\mathcal{P}}
\newcommand{\vertices}{\operatorname{vert}}
\newcommand{\facets}{\operatorname{fct}}
\newcommand{\vx}{\operatorname{vert}}
\newcommand{\floor}[1]{\left\lfloor #1 \right\rfloor}
\newcommand{\ceil}[1]{\left\lceil #1 \right\rceil}
\newcommand{\sprod}[2]{\left< #1 , #2 \right>}
\newcommand{\aff}{\operatorname{aff}}
\newcommand{\setcond}[2]{\left\{ #1 : #2 \right\}}
\newcommand{\nonverts}[2]{#2 \cap #1 \setminus \vertices(#1)}
\newcommand{\ac}{\operatorname{ac}}
\DeclarePairedDelimiter\card{\lvert}{\rvert}
\newcommand{\nverts}[1]{\card{\vertices(#1)}}
\begin{document}

\title{Tight bounds on discrete quantitative Helly numbers}
\date{\today}

\author[Averkov]{Gennadiy Averkov$^1$}
\author[Gonz\'alez Merino]{Bernardo Gonz\'alez Merino$^2$}
\author[Henze]{Matthias Henze$^3$}
\author[Paschke]{Ingo Paschke$^1$}
\author[Weltge]{Stefan Weltge$^1$}

\address{$^1$Otto-von-Guericke-Universität Magdeburg, Germany}
\address{$^2$Technische Universität München, Germany}
\address{$^3$Freie Universität Berlin, Germany}

\email{averkov@ovgu.de}
\email{bg.merino@tum.de}
\email{matthias.henze@fu-berlin.de}
\email{ipaschke@gmx.net}
\email{weltge@ovgu.de}

\thanks{$^2$The second author is partially supported by Consejer\'ia de Industria,
Turismo, Empresa e Innovaci\'on de la CARM through Fundaci\'on S\'eneca, Agencia de Ciencia y Tecnolog\'ia de la Regi\'on de
Murcia, Programa de Formaci\'on Postdoctoral de Personal Investigador and Fundaci\'on S\'eneca project 19901/GERM/15, and MINECO project reference MTM2015-63699-P, Spain.
$^3$The third author was supported by the Freie Universität Berlin within the Excellence Initiative of the
German Research Foundation.}

\begin{abstract}
    Given a subset $ S $ of $ \R^n $, let $ c(S, k) $ be the smallest number $ t $ such that whenever finitely many convex sets have exactly $ k $ common points in $ S $, there exist at most $ t $ of these sets
    that already have exactly $ k $ common points in $ S $.
    For $ S = \Z^n $, this number was introduced by \citeauthor{AlievBDL14}~\citeyearpar{AlievBDL14} who
    gave an explicit bound showing that $ c(\Z^n,k) = \orderO(k) $ holds for every fixed $ n $.
    Recently, \citeauthor{ChestnutHZ15}~\citeyearpar{ChestnutHZ15} improved this to $ c(\Z^n,k) = \orderO(k \cdotp (\log
    \log k) \cdot (\log k)^{-1/3} ) $ and provided the lower bound $ c(\Z^n,k) = \orderOmega(k^{(n-1)/(n+1)}) $.

    We provide a combinatorial description of $c(S,k)$ in terms of polytopes with vertices in $S$ and use it to improve the previously known bounds as follows: We strengthen the bound of \citeauthor{AlievBDL14}~\citeyearpar{AlievBDL14} by a constant factor and extend it to general discrete sets~$S$. 
    We close the gap for~$ \Z^n $ by showing that $c(\Z^n,k) = \orderTheta(k^{(n-1)/(n+1)})$ holds for every fixed $n$. Finally, we determine the exact values of $ c(\Z^n,k) $ for all $k \le 4$.
\end{abstract}

\maketitle

\section{Introduction}
\noindent
%
Let $ n \in \N $ denote the dimension of the ambient space $ \R^n $.
\citeauthor{Doignon1973}~\cite{Doignon1973} obtained the following analog of the classical theorem of \citeauthor{Helly1923}~\cite{Helly1923}:
If convex sets $C_1,\ldots,C_m$ ($m \in \N$) have no point of $ \Z^n $ in common, then there exists a subset $I$ of $\{1,\ldots,m\}$ with at most $2^n$ elements such that the sets $C_i$ with $i \in I$ already have no point of $ \Z^n $ in common.
It is not possible to replace~$2^n$ by a smaller number. This result was rediscovered independently by \citeauthor{Bell1977}~\cite{Bell1977},
\citeauthor{Scarf1977}~\cite{Scarf1977} and \citeauthor{Hoffman78}~\cite{Hoffman78}. In this paper, we continue the recent studies in \cite{AlievBDL14,ChestnutHZ15,DeLoeraLHRS15} on quantitative versions of Doignon's theorem. Our main object of interest is the following number:

\begin{defn}[Quantitative Helly number] \label{defQuantHelly}
 Let $S \subseteq \R^n$ and $k \in \N_0$. We define the quantitative Helly number $c(S,k)$ as the smallest number $ t \in \N_0$ satisfying the following:
\begin{itemize}
	\item[] If convex sets $C_1,\ldots,C_m$ $(m \in \N)$ have exactly $k$ points of $S$ in common, then there exists a subset $I$ of $\{1,\ldots,m\}$ with at most $t$ elements such that the sets $C_i$ with $i \in I$ already have exactly $k$ points of $S$ in common.
\end{itemize}
In the degenerate case that no such number $t$ exists, let $c(S,k):=\infty$, and if there is no convex set that contains  exactly $k$ points of $S$, let $c(S,k):=-\infty$.
\end{defn}

It turns out that restricting $C_1,\ldots,C_m$ in Definition~\ref{defQuantHelly} to closed halfspaces gives an equivalent definition of $c(S,k)$; see Lemma~\ref{lemRestrictionToClosedHalfSpaces} in Section~\ref{sectPolytopal}.
The number
\[
h(S):=c(S,0)
\]
is called the \emph{Helly number} of (the space) $S$; see \cite{AmentaDS15,Averkov13,AverkovW12}.
\citeauthor{Doignon1973}'s theorem gives the equality $c(\Z^n,0) = h(\Z^n) = 2^n$. Since $c(S,k)$ and by this also $h(S)$ is invariant under non-singular affine transformations, Doignon's theorem can also be formulated in a coordinate-free form in terms of lattices and their rank.
The values of $c(S,k)$ for $S=\R^n$ correspond to the classical theorems of Helly $c(\R^n,0) = h(\R^n) = n+1$
and Steinitz $c(\R^n,1)=2n$  (for the latter, see the explanation given in \cite{Gruenbaum1962}); one obviously has $c(\R^n,k)=-\infty$ for every $k\geq2$. The study of  $c(S,k)$ for $S=\Z^n$ is motivated by applications to integer linear programming and has become a very active research topic;
see  \cite{AlievBDL14,AmentaDS15,Averkov13,AverkovW12,DeLoeraLHORP15,Hoffman78} for more information and \cite{BasuO2015,DeLoeraLHORP15,de2015beyond} for related algorithmic research.

In view of applications, it is interesting to describe the asymptotic behavior of $c(\Z^n,k)$ and to understand how
$c(\Z^n,k)$ can be computed in concrete situations.  
Already in \citeauthor{Bell1977}'s work~\cite{Bell1977} one can find a generalization of the inequality $h(\Z^n) \le 2^n$ which can be formulated as
\begin{equation*} \label{eqBellBound}
    c(\Z^n, k) \le (k+2)^n.
\end{equation*}
\citeauthor{AlievBDL14}~\cite{AlievBDL14} introduced $c(\Z^n,k)$ explicitly and improved Bell's bound to
\begin{equation}
    \label{eqAlievEtAlBound}
    c(\Z^n, k) \le \ceil{2(k+1)/3} (2^n - 2 ) + 2.
\end{equation}
Thus, the growth of $c(\Z^n,k)$ is at most linear in $k$.
Recently \citeauthor{ChestnutHZ15}~\cite{ChestnutHZ15} showed that this number grows only sublinearly in $ k $ by proving
\begin{equation}
	\label{eqChestnutEtAlupper}
    c(\Z^n, k) \le C \cdot k  (\log \log k)  (\log k)^{-1/3} \cdot 2^n
\end{equation}
whenever $\log k > 1$ and $n \in \N$, where $C>0$ is an (unknown) absolute constant.
As a complement to the upper bound, \citeauthor{ChestnutHZ15}~\cite{ChestnutHZ15}  established the lower bound
\begin{equation}
    \label{eqChestnutLowerBound}
    c(\Z^n, k) = \orderOmega(k^{\frac{n-1}{n+1}})
\end{equation}
for every fixed $ n\in\N $ and showed that this lower bound is asymptotically tight for $ n = 2 $.

\subsection*{Our contribution.}
We study $c(S,k)$ in the case of discrete $S$, paying special attention to $S=\Z^n$.
We call a set $ S \subseteq \R^n $ \emph{discrete} if every bounded subset of $ S $ is finite.
Our first main result provides an exact `polytopal description' of $c(S,k)$, which we  use as a tool in the proofs of all the other results. Let $\cP(S)$ be the set of all polytopes whose vertices belong to $S$. In particular, $\cP(\Z^n)$ is the well-known family of \emph{integral polytopes}. For $k \in \N_0$ we introduce
\[
	g(S,k) := \max \setcond{\nverts{P}}{P \in \cP(S), \,\card{\nonverts{P}{S}} = k}.
\]
Here, as usual $\vertices(P)$ denotes the set of all vertices of $P$. Note that in degenerate cases, $g(S,k)$ can be $-\infty$ or $\infty$. It turns out that the sequence $g(S,0), g(S,1), \ldots$ determines the sequence $c(S,0), c(S,1),\ldots $ completely:

\begin{thm} \label{thmPolytopalDescr}
	Let $S \subseteq \R^n$ be discrete and $k \in \N_0$. Then, one has
	\begin{equation} \label{eqPolytopalDescr}
		c(S,k) = \max \setcond{g(S,\ell) + \ell -k}{\ell \in \{0,\ldots,k\}, \ g(S,\ell) + \ell - k \ge 0}
	\end{equation}
	Furthermore, the condition $c(S,k)> -\infty$ is equivalent to $k \le |S|$, and under this condition, $c(S,k)$ can be represented recursively as
	\begin{align} 
		c(S,0) = g(S,0) \quad\text{and}\quad c(S,k) = \max \{ c(S,k-1)-1, g(S,k) \} \quad\text{for} \ 0 < k \le |S|. \label{eqCRecursion}
	\end{align}
\end{thm}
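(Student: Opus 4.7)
The plan is to invoke Lemma~\ref{lemRestrictionToClosedHalfSpaces} to restrict attention to closed halfspaces, prove the identity \eqref{eqPolytopalDescr} by matching lower and upper bounds, and then derive both the degenerate case and the recursion \eqref{eqCRecursion} as formal consequences.

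For the lower bound $c(S,k) \ge g(S,\ell) + \ell - k$, I would fix any $\ell \in \{0,\dots,k\}$ satisfying $g(S,\ell) + \ell \ge k$ and pick a polytope $P \in \cP(S)$ with $\nverts{P} = g(S,\ell)$ and exactly $\ell$ non-vertex $S$-points. Partition $\vertices(P)$ into sets $V_1$ of size $k-\ell$ and $V_2$ of size $g(S,\ell) + \ell - k$. For each $v \in V_2$, pick a supporting functional of $P$ uniquely maximised at $v$ and slightly shift its level inward to obtain a closed halfspace $H_v$ containing $(P \cap S) \setminus \{v\}$ but excluding $v$. Combining these with the facet halfspaces of $P$ produces a family whose intersection is contained in $P$ and whose $S$-intersection is $V_1$ together with the $\ell$ non-vertex $S$-points of $P$, giving exactly $k$ common points of $S$. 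Each $H_v$ is essential since removing it reinstates $v$, so any subset achieving the same $S$-intersection must contain all $|V_2|$ of them, yielding $c(S,k) \ge g(S,\ell) + \ell - k$.

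For the upper bound, I start with closed halfspaces $H_1,\dots,H_m$ satisfying $\bigcap_i H_i \cap S = T$ with $|T| = k$, and pass to an inclusion-minimal subfamily $\{H_i : i \in I\}$. For each $i \in I$, minimality produces a witness $s_i \in (\bigcap_{j \in I,\, j \ne i} H_j \cap S) \setminus T$; the candidate sets for distinct indices are pairwise disjoint (a common point would lie in all $H_j$, hence in $T$), so the $s_i$ may be chosen distinct, and the linear functional defining $H_i$ exhibits $s_i$ as the unique maximiser over $T \cup \{s_j : j \in I\}$, so each $s_i$ is a vertex of the polytope $P' := \conv(T \cup \{s_i : i \in I\}) \in \cP(S)$. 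Setting $\ell' := |P' \cap S| - \nverts{P'}$ and combining $\nverts{P'} \ge |I|$, $|P' \cap S| \ge k + |I|$, and $\nverts{P'} \le g(S,\ell')$ yields $|I| \le g(S,\ell') + \ell' - k$.

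The main obstacle is to ensure $\ell' \le k$, so that the bound fits the allowed range of the maximum in \eqref{eqPolytopalDescr}; a priori, $P'$ might contain many additional $S$-points as non-vertices. My plan is to choose the witnesses so that $P' \cap S = T \cup \{s_i : i \in I\}$: any extra $p \in P' \cap S$ is excluded by some $H_j$, and if $p \in \bigcap_{l \in I,\, l \ne j} H_l$ then $p$ is itself an admissible witness for index $j$, so we may swap $s_j := p$; an iterative swapping argument together with finiteness of $P' \cap S$ produces a stable choice with no extras, at which point $\ell' = k + |I| - \nverts{P'} \le k$. Once \eqref{eqPolytopalDescr} is established, $c(S,k) > -\infty$ if and only if $k \le |S|$ follows by constructing, when $k \le |S|$, a small bounded polytope around $k$ chosen $S$-points that isolates them by discreteness, while the recursion \eqref{eqCRecursion} arises by isolating the $\ell = k$ summand $g(S,k)$ in \eqref{eqPolytopalDescr} and recognising that the $\ell \le k-1$ contributions, after shifting, equal $c(S,k-1) - 1$.
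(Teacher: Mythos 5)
Your lower bound argument and your derivation of the recursion from \eqref{eqPolytopalDescr} are sound and essentially coincide with the paper's (the paper uses the convex sets $\conv(S\cap P\setminus\{v\})$ where you use shifted supporting halfspaces, which makes no difference). The gap is in the upper bound, precisely at the step you flag as the main obstacle. Your repair -- swapping a witness $s_j$ for an extra point $p\in P'\cap S$ -- only applies when $p$ violates exactly one of the halfspaces $H_l$, $l\in I$. An extra point can violate two or more of them, in which case it is not an admissible witness for any index and no swap is available. Concretely, take $S=\Z^2$, $k=0$, $T=\emptyset$ and the minimal family $H_1=\{x: x_1\ge\eps\}$, $H_2=\{x: x_2\ge\eps\}$, $H_3=\{x: x_1+x_2\le 1-\eps\}$ for small $\eps>0$. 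The witnesses forced on you (even the ``extremal'' ones, minimizing the violation) are $s_1=(-1,1)$, $s_2=(1,-1)$, $s_3=(1,1)$, and $P'=\conv(s_1,s_2,s_3)$ contains the extra points $(0,0)$, $(1,0)$, $(0,1)$, each of which violates exactly \emph{two} of the three halfspaces. So the iteration never starts, $\ell'=3>k=0$, and your bound $\card{I}\le g(S,\ell')+\ell'-k$ falls outside the range of the maximum in \eqref{eqPolytopalDescr}. (Even in the single-violation case your termination argument is incomplete: after a swap the new polytope need not lose any point of $S$, so finiteness of $P'\cap S$ alone does not preclude cycling.)

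The paper avoids this by not working with $\conv(T\cup\{s_i\})$ at all. Instead it relaxes the halfspaces \emph{outward and consecutively}: each $\gamma_i$ is increased just until the hyperplane $\sprod{a_i}{x}=\gamma_i$ meets a first point $s_i'$ of $S'\setminus T$ that strictly satisfies all the \emph{already relaxed} other inequalities. The consecutive order is essential -- in the example above, after relaxing $H_1$ and $H_2$ to $x_1>-1$ and $x_2>0$, the admissible region for the third witness changes and $(0,1)$ becomes available, yielding the unimodular triangle $\conv((-1,1),(0,0),(0,1))$ with no extra lattice points. This relaxation maintains the invariant that the strict system still has solution set exactly $T$ in $S'$, which forces $S\cap\conv(T\cup\{s_1',\ldots,s_t'\})=T\cup\{s_1',\ldots,s_t'\}$ and hence $\ell'\le k$. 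Some mechanism of this kind (re-choosing all witnesses jointly with respect to the relaxed system, not patching them one at a time inside a fixed $P'$) is what your proof is missing.
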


\begin{figure}
	\begin{tikzpicture}[scale=0.45, every node/.style={scale=0.7}]
	\def\radius{0.15}
	\tikzstyle{line} = [thick];
	
	\foreach \dx in {0, 5, 10, 15, 20, 25} {
		\foreach \x\y in {0/0, 0/1, 0/2, 1/0, 1/1, 1/2, 2/0, 2/1, 2/2} {
			\fill (\x + \dx, \y) circle (\radius);
		}
	}
	
	\draw[line] (0,0) -- (1,0) -- (1,1) -- (0,1) -- cycle;
	\draw[line] (5,0) -- (6,0) -- (7,1) -- (7,2) -- (6,2) -- (5,1) -- cycle;
	\draw[line] (10,0) -- (12,0) -- (12,1) -- (11,2) -- (10,1) -- cycle;
	\draw[line] (15,0) -- (17,0) -- (17,1) -- (16,2) -- (15,2) -- cycle;
	\draw[line] (25,0) -- (27,0) -- (27,2) -- (25,2) -- cycle;
	
	\node at ( 1,-0.8) {$ g(S,0) = 4 $};
	\node at ( 6,-0.8) {$ g(S,1) = 6 $};
	\node at (11,-0.8) {$ g(S,2) = 5 $};
	\node at (16,-0.8) {$ g(S,3) = 5 $};
	\node at (21,-0.8) {$ g(S,4) = -\infty $};
	\node at (26,-0.8) {$ g(S,5) = 4 $};
	\end{tikzpicture}
	\caption{Values of $ g(S,k) $ for $ S = \{0,1,2\}^2 $ and $ k \in \{0,\dotsc,5\} $ together with polytopes attaining
		these values. The respective values of $c(S,k)$ can be determined using \eqref{eqCRecursion}; see also Figure~\ref{figureCNumbers}.}
	\label{figGNumbers}
\end{figure}
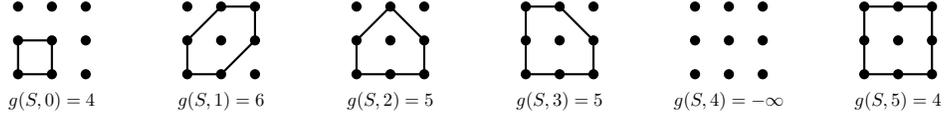

The polytopal representation of the Helly number $h(S) = g(S,0)$ provided in~\eqref{eqCRecursion} was given in  \cite[Lem.~2.2]{DeLoeraLHORP15}. Clearly, \eqref{eqCRecursion} implies $g(S,k) \ge g(S,k-1) -1$ for all $k \in \N$ with $k \le |S|$; this inequality was derived in \cite{ChestnutHZ15} for $S=\Z^n$. 

Theorem~\ref{thmPolytopalDescr} shows that problems for $c(S,k)$ can be reworded as problems for polytopes in $\cP(S)$. One can reformulate \eqref{eqPolytopalDescr} without any use of $g(S,0),\ldots,g(S,k)$ as 
\begin{equation} \label{eqPolytInterp}
	c(S,k) = \max \setcond{\card{S \cap P} -k}{P \in \cP(S), \,\card{\nonverts{P}{S}} \le k \le \card{S \cap P}}.
\end{equation}
Representation~\eqref{eqPolytopalDescr} immediately implies the bounds
\begin{equation}
	\label{boundsCthrougG}
	g(S,k) \le c(S,k) \le \max \{g(S,0),\ldots,g(S,k)\}.
\end{equation}
Thus, one can bound $c(S,k)$ from above by bounding $g(S,0),\ldots,g(S,k)$. Moreover, if $g(S,k)$ turns out to be a largest value among $g(S,0),\ldots,g(S,k)$, one even has $c(S,k)=g(S,k)$. Using the upper bound in \eqref{boundsCthrougG}, we derive the following general upper bound on $c(S,k)$.
\begin{thm}
    \label{thmUpperBoundGeneral}
    Let $S$ be a discrete subset of $\R^n$ with $\card{S} \ge 2$ and let $k \in \N_0$. Then one has
    \[
        c(S,k) \le \floor{(k+1)/2} (h(S)-2) + h(S).
    \]
\end{thm}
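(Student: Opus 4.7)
Via the upper bound $c(S,k)\le\max\{g(S,0),\ldots,g(S,k)\}$ in~\eqref{boundsCthrougG} and the fact that $\lfloor(\ell+1)/2\rfloor(h(S)-2)+h(S)$ is non-decreasing in $\ell$, the statement reduces to proving
\[
g(S,\ell)\le\left\lfloor\tfrac{\ell+1}{2}\right\rfloor(h(S)-2)+h(S)\quad\text{for every }\ell\ge 0 \text{ with }g(S,\ell)\neq-\infty,
\]
which I would establish by induction on $\ell$. The case $\ell=0$ is just $g(S,0)=h(S)$. For $\ell=1$, I would take $P\in\cP(S)$ attaining $g(S,1)$ with unique non-vertex $S$-point $q$ and pick a hyperplane $H$ through $q$ that misses $\vertices(P)$ and splits $P$ properly (a generic choice, arranged so that $H$ does not contain the affine hull of the smallest face of $P$ having $q$ in its relative interior). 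Writing $V^\pm:=\vertices(P)\cap H^\pm$, the polytopes $P^\pm:=\conv(V^\pm\cup\{q\})$ lie in $\cP(S)$, have vertex sets $V^\pm\cup\{q\}$ (which follows from standard separation arguments using that $H$ strictly separates $V^+$ from $V^-$), and contain no $S$-point outside their vertices; hence $|V^\pm|+1\le h(S)$ and $|\vertices(P)|=|V^+|+|V^-|\le 2h(S)-2$.

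For $\ell\ge 2$, the core step is a \emph{double cut}: find $q_1,q_2\in\nonverts{P}{S}$ and a hyperplane $H$ through both so that (a) $\nonverts{P}{S}\setminus\{q_1,q_2\}$ lies strictly in $H^+$, (b) $H\cap\vertices(P)=\emptyset$, and (c) $V^\pm:=\vertices(P)\cap H^\pm$ are both non-empty. Granting such an $H$, the polytope $P^-:=\conv(V^-\cup\{q_1,q_2\})$ has vertex set $V^-\cup\{q_1,q_2\}$ with no non-vertex $S$-point, forcing $|V^-|\le h(S)-2$; and $P^+:=\conv(V^+\cup\{q_1,q_2\})$ has vertex set $V^+\cup\{q_1,q_2\}$ with at most $\ell-2$ non-vertex $S$-points. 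Hence
\[
|\vertices(P)|=|V^+|+|V^-|\le g(S,\ell-2)+h(S)-4,
\]
and plugging the inductive hypothesis into the right-hand side gives a bound of $(\lfloor(\ell-1)/2\rfloor+2)(h(S)-2)$, which via $\lfloor(\ell+1)/2\rfloor=\lfloor(\ell-1)/2\rfloor+1$ coincides with $\lfloor(\ell+1)/2\rfloor(h(S)-2)+h(S)-2$ and so is within the target with slack $2$.

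The main obstacle is producing the hyperplane $H$ for the double cut. The natural recipe is to take $q_1$ to be an extreme point of $\conv(\nonverts{P}{S})$, to start from a hyperplane $H_0$ through $q_1$ strictly separating $q_1$ from $\nonverts{P}{S}\setminus\{q_1\}$, and to rotate $H_0$ about $q_1$ within the $(n-1)$-parameter family of hyperplanes through $q_1$, preserving (a), until the rotating hyperplane first meets a second non-vertex $S$-point $q_2$; a generic further perturbation then secures (b) and (c). The delicate case is when (c) cannot be maintained simultaneously with (a)---for instance in $\R^2$ when all non-vertex $S$-points of $P$ lie on a common edge $F$, so the line through any two of them must coincide with $\aff F$, which is supporting to $P$. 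Such configurations must be treated separately: one may either descend to $F$ as a polytope in $\cP(S\cap\aff F)$ of lower ambient dimension and bound the off-$F$ vertices by a dedicated argument, or exploit the rigidity of having all non-vertex $S$-points confined to a single face to verify that $|\vertices(P)|$ already sits comfortably inside the claimed bound, so that the induction still closes.
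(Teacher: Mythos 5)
Your overall strategy is the same as the paper's: reduce to $g(S,\ell)$ via \eqref{boundsCthrougG}, handle $\ell\le 1$ by a single cut through the unique non-vertex point, and for larger $\ell$ split off two non-vertex $S$-points with a hyperplane, bounding one side by $h(S)$ and recursing on the other with parameter $\ell-2$. The arithmetic at the end is also correct, including the observation that there is slack $2$. However, the load-bearing step --- the existence of the ``double cut'' hyperplane $H$ with properties (a), (b), (c) --- is not established, and you yourself flag the configurations where it fails without resolving them. Concretely: once $q_1,q_2$ are fixed, every hyperplane through them contains the line $\aff\{q_1,q_2\}$, so (b) is simply impossible whenever a vertex of $P$ lies on that line (up to two can), and (c) is impossible whenever that line forces $H$ to support $P$ (e.g.\ all non-vertex points on a common edge of $P$). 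The multi-parameter ``rotation until first meeting $q_2$'' is also not a well-defined procedure, and may land on a position where several further points of $\nonverts{P}{S}$ enter $H$ simultaneously, breaking (a). Offering two alternative unexecuted repairs (``descend to $F$\dots or exploit the rigidity\dots'') leaves the induction step unproved exactly in the cases that require care.

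The paper closes precisely these holes by making three specific choices. First, for $k\in\{1,2\}$ it does not attempt a strict double cut at all but invokes Lemma~\ref{lemUpperBoundGeneralDataQ} with $Q=\conv(\nonverts{P}{S})$ of dimension at most $1$, which explicitly books the vertices of $P$ lying on $\aff(Q)$ as a separate set $V'$ with $\card{V'}\le 2$. Second, for $k\ge 3$ the cutting hyperplane is taken to \emph{support} $\conv(X)$ along an edge $E$, so condition (a) holds automatically, no strict separation of $\vertices(P)$ is needed (an empty $V_1$ or $V_2$ causes no harm), and the at most two vertices in $\aff(E)\cap\vertices(P)$ are again carried as an explicit $+\card{V'}\le +2$ term --- this is exactly where your slack of $2$ gets spent, rather than being left over. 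Third, the two points on $E$ are chosen to be \emph{consecutive} points of $S$, which is what guarantees that $P_1=\conv(V_1\cup\{s_1,s_2\})$ has no non-vertex $S$-point even when $E$ contains further points of $X$. If you replace your genericity/rotation argument by this supporting-hyperplane-at-an-edge construction and add the $V'$ bookkeeping, your induction closes; as written, the proof has a genuine gap at its central geometric step.
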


Theorem~\ref{thmUpperBoundGeneral} illustrates that the bound $c(S,k)=\orderO(k)$ follows from $h(S)<\infty$.
Highlighting this message is one of the points of motivation for considering general discrete sets $S$. For $S=\Z^n$, Theorem~\ref{thmUpperBoundGeneral} implies \eqref{eqAlievEtAlBound} and improves it by a constant factor for all $n \in \N$ and sufficiently large $k$.

The next two results are for $S=\Z^n$. The first one implies that the exact asymptotic order of $c(\Z^n,k)$ is  $\Theta(k^{\frac{n-1}{n+1}})$ for every fixed $n \in \N$.

\begin{thm}
	\label{thmLowerAndUpperBoundIntegers}
	Let $n,k \in \N$. Then one has
	\begin{equation*} \label{eqMainLowerAndUpperBound}
		\floor{(k / (2n) )^{\frac{1}{n+1}}}^{n-1}
		\le c(\Z^n, k)
		\le (3n)^{5n} \cdotp k^{\frac{n-1}{n+1}}.
	\end{equation*}
\end{thm}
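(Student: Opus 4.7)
The plan is to reduce both bounds to bounds on $g(\Z^n, k)$. A preliminary observation: the inequality $g(\Z^n, \ell) \ge g(\Z^n, \ell-1) - 1$ noted just after Theorem~\ref{thmPolytopalDescr} says that $\ell \mapsto g(\Z^n, \ell) + \ell$ is monotone non-decreasing. Substituted into the max in~\eqref{eqPolytopalDescr}, this shows the max is attained at $\ell = k$, yielding the identity $c(\Z^n, k) = g(\Z^n, k)$, so it suffices to prove the two bounds for $g(\Z^n, k)$.

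For the upper bound I would invoke an effective form of Andrews' theorem: every $P \in \cP(\Z^n)$ with $N$ lattice points satisfies $\nverts{P} \le C(n) \cdot N^{(n-1)/(n+1)}$ for an explicit dimension-dependent $C(n)$, obtained by tracking the constants in a standard proof of Andrews' inequality. Applied to a polytope realising $g(\Z^n, k)$ --- with $V$ vertices and $k$ non-vertex lattice points --- this gives $V \le C(n)(V+k)^{(n-1)/(n+1)}$. A case distinction on $V \le k$ versus $V > k$ (in the latter, the inequality alone forces $V$ to be bounded by a function of $n$ only) yields $V \le C'(n) \cdot k^{(n-1)/(n+1)}$; the main quantitative step is to track $C(n)$ carefully enough to verify $C'(n) \le (3n)^{5n}$.

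For the lower bound, set $m := \floor{(k/(2n))^{1/(n+1)}}$. It suffices to construct a polytope $P \in \cP(\Z^n)$ with $\card{P \cap \Z^n} \ge k + m^{n-1}$ and $\nverts{P} \ge m^{n-1}$, because then its non-vertex count $\ell := \card{P \cap \Z^n} - \nverts{P}$ satisfies $\ell \le k$ and by Theorem~\ref{thmPolytopalDescr},
\[
g(\Z^n, k) \;\ge\; g(\Z^n, \ell) + \ell - k \;\ge\; \nverts{P} + \ell - k \;=\; \card{P \cap \Z^n} - k \;\ge\; m^{n-1}.
\]
For the construction I would take $P := \conv(\Z^n \cap R \cdot K)$ for a generic strictly convex body $K \subset \R^n$ (chosen so that lattice points enter $R \cdot K$ one at a time as $R$ grows continuously) and pick $R$ so that $\card{P \cap \Z^n}$ equals $k + m^{n-1}$ exactly; a classical Andrews-type \emph{lower} bound on the vertex count of the convex hull of lattice points in a strictly convex body then forces $\nverts{P} \gtrsim R^{n(n-1)/(n+1)} \gtrsim k^{(n-1)/(n+1)}$, and the factor $1/(2n)$ is fixed by matching the volume of $K$ against the constant in this lower bound. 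The hardest step will be verifying that $K$ can be chosen to make the lattice count hit $k + m^{n-1}$ exactly while the Andrews-type vertex lower bound has a constant compatible with $1/(2n)$.
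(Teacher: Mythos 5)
Your reduction to $g(\Z^n,k)$ is the right first move, but the upper bound rests on a result that does not exist in the form you invoke. Andrews' theorem bounds $\nverts{P}$ by $C(n)\,\vol(P)^{(n-1)/(n+1)}$, i.e.\ in terms of the \emph{volume}, and in dimension $n\ge 3$ the volume of a lattice polytope is not controlled by its lattice-point count (Reeve simplices have $n+1$ lattice points and arbitrarily large volume). So the volume form does not yield your claimed inequality $\nverts{P}\le C(n)\,\card{P\cap\Z^n}^{(n-1)/(n+1)}$, and that inequality is in fact essentially equivalent to the bound you are trying to prove (your own case distinction $V\le k$ vs.\ $V>k$ shows the equivalence). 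Invoking it as ``an effective form of Andrews' theorem'' assumes away the entire difficulty. The paper supplies exactly the missing bridge: if $P$ contains a ``deep'' integer point $y$ with $\ac(P,y)\le 2n+1$, a van der Corput--type inequality of Lagarias--Ziegler gives $\vol(P)\le(3n)^n k$ and the volume form of Andrews applies; otherwise the flatness theorem produces a lattice direction in which $P$ has width at most $2n^{5/2}$, and $P$ is sliced into $O(n^{5/2})$ lower-dimensional layers handled by induction on $n$. Without some such argument your upper bound is not proved. A smaller point: your opening identity $c(\Z^n,k)=g(\Z^n,k)$ needs $g(\Z^n,\ell)\ge g(\Z^n,\ell-1)-1$ for every $\ell$, which (despite the remark after Theorem~\ref{thmPolytopalDescr}, contradicted for general $S$ by Figure~\ref{figGNumbers}) is precisely the paper's open Question~3 for $S=\Z^n$. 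Fortunately it is not needed: the upper bound only requires $c(\Z^n,k)\le\max\{g(\Z^n,0),\dots,g(\Z^n,k)\}$ from \eqref{boundsCthrougG}, and the lower bound only requires $c(\Z^n,k)\ge g(\Z^n,\ell)+\ell-k$ from \eqref{eqPolytopalDescr}.

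For the lower bound your strategy (a strictly convex region whose extremal lattice points are all vertices of the hull) is the right idea, but the appeal to a ``classical Andrews-type lower bound'' for a generic strictly convex body is not a safe citation: such lower bounds are known for bodies with smooth, positively curved boundary (e.g.\ balls, B\'ar\'any--Larman), not for arbitrary strictly convex bodies, and their constants are nowhere near explicit enough to certify the factor $1/(2n)$ demanded by the statement. The paper sidesteps this entirely with an explicit example: the convex hull of the lattice points squeezed between the graphs of the strictly convex function $\ell(x)=\sum_i(x_i^2-t^2)$ and the strictly concave function $u(x)=s+\sum_i(t^2-x_i^2)$ over $[1,t]^{n-1}$ has exactly $2t^{n-1}$ vertices and an exactly computable number of non-vertex lattice points, and tuning the height parameter $s$ makes that number land within $t^{n-1}$ of $k$, after which \eqref{eqPolytopalDescr} gives $c(\Z^n,k)\ge 2t^{n-1}-t^{n-1}=t^{n-1}$. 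To salvage your version you would either have to switch to such an explicit construction or carry out a cap-counting vertex lower bound with explicit constants and also justify the ``lattice points enter one at a time'' tuning of $R$; neither step is routine, and neither is sketched.
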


The lower bound of Theorem~\ref{eqMainLowerAndUpperBound} is a concrete version of \eqref{eqChestnutLowerBound}, which we obtain using a short elementary argument. The upper bound can be considered as the main contribution of this paper.

Currently, the values of $c(\Z^n,k)$ are known exactly only in a few cases. \citeauthor{AlievBDL14}~\cite{AlievBDL14} observed that their inequality \eqref{eqAlievEtAlBound} holds with equality for $k=0$ and $k=1$, where the case $k=0$ corresponds to Doignon's theorem.
\citeauthor{ChestnutHZ15}~\cite{ChestnutHZ15} showed that \eqref{eqAlievEtAlBound} still holds with equality for $k=2$.
Continuing this line of research, we determine $c(\Z^n,k)$ for $k=3$ and $k=4$. Thus, the five values of $c(\Z^n,k)$ which are now known exactly are as follows.

\begin{thm}
	\label{thmSpecialValues}
	Let $n\geq2$. Then one has
	\[
	c(\Z^n,0)=2^n,\quad c(\Z^n,1)=c(\Z^n,2)=c(\Z^n,3)=2^{n+1}-2\quad\text{and}\quad c(\Z^n,4)=2^{n+1}.
	\]
\end{thm}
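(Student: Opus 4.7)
By Theorem~\ref{thmPolytopalDescr}, it is enough to determine $g(\Z^n, \ell)$ for $\ell \in \{0, 1, 2, 3, 4\}$ and apply the recursion $c(\Z^n, k) = \max\{c(\Z^n, k-1) - 1,\, g(\Z^n, k)\}$. The values $g(\Z^n, 0) = 2^n$ (Doignon's theorem) and $g(\Z^n, 1) = g(\Z^n, 2) = 2^{n+1} - 2$ (consequences of \cite{AlievBDL14, ChestnutHZ15} together with the recursion) are already available, so the task reduces to proving the two new identities
\[
	g(\Z^n, 3) = 2^{n+1} - 2 \qquad \text{and} \qquad g(\Z^n, 4) = 2^{n+1}.
\]
Once these are in hand, unwinding the recursion yields $c(\Z^n, 3) = \max\{2^{n+1} - 3,\, 2^{n+1} - 2\} = 2^{n+1} - 2$ and $c(\Z^n, 4) = \max\{2^{n+1} - 3,\, 2^{n+1}\} = 2^{n+1}$.

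For the lower bounds I would exhibit explicit lattice polytopes. For $g(\Z^n, 3) \ge 2^{n+1} - 2$ I would take an extremal polytope for $g(\Z^n, 1) = 2^{n+1} - 2$---such as the convex hull of two translates of $\{0, 1\}^n$ placed along a skew lattice vector so as to create exactly one interior lattice point---and modify it by a small lattice stretching so that two further non-vertex lattice points appear without disturbing the vertex set. For $g(\Z^n, 4) \ge 2^{n+1}$, I would use $P = \conv(\{0,1\}^n \cup (t + \{0, 1\}^n))$ for a suitable lattice translate $t$ chosen so that all $2^{n+1}$ candidate vertices remain extreme and exactly four additional lattice points lie in the ``bridge'' between the two cubes. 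In both cases, checking extremality and enumerating the extra lattice points reduces to a finite verification.

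The main obstacle is the matching upper bound. Given $P \in \cP(\Z^n)$ with $v$ vertices and $k$ non-vertex lattice points, I would partition $\vertices(P)$ into the $2^n$ parity classes $V_c := \vertices(P) \cap (c + 2\Z^n)$ for $c \in \{0, 1\}^n$, writing $v_c := \card{V_c}$. The midpoint of two distinct vertices lying in the same class is a lattice point of $P$ that is not a vertex, since it belongs to the relative interior of an edge of $\conv(V_c) \subseteq P$; a short check shows that within a single class of size at most $3$ the $\binom{v_c}{2}$ pairs produce pairwise distinct midpoints. I plan to case-split on $v_{\max} := \max_c v_c$. When $v_{\max} \le 2$, writing $a$, $b$, $e$ for the number of classes of size $2$, $1$, $0$ respectively gives $a + b + e = 2^n$ and $v = 2^{n+1} - b - 2e$, while the $a$ midpoints must concentrate onto at most $k$ lattice points. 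When $v_{\max} \ge 3$, the class attaining $v_{\max}$ already contributes $\binom{v_{\max}}{2}$ distinct non-vertex lattice points, which sharply constrains the remaining classes. The hardest sub-case will be the one in which many pairs from size-$2$ classes share a common midpoint $p$, since this forces $\vertices(P)$ to be nearly invariant under the point-reflection through $p$; the rigidity resulting from several such simultaneous symmetries should be contradicted by a short convexity argument exploiting the extreme-point structure of $P$.
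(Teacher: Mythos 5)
Your overall outline (reduce to $g(\Z^n,k)$ via Theorem~\ref{thmPolytopalDescr}, lower bounds by explicit polytopes, upper bounds by a mod-$2$ parity-class count) matches the paper's strategy in broad strokes, but the core of the upper bound is where your argument has a genuine gap. You propose to handle \emph{every} case by the parity count, and you correctly identify that the hard sub-case is when many size-$2$ classes share a common midpoint $p$; however, your plan to refute that configuration --- ``$\vertices(P)$ is nearly invariant under the point-reflection through $p$, and this rigidity should be contradicted by a short convexity argument'' --- cannot work, because that configuration is not contradictory: it is exactly the structure of the extremal example. For $P_n=\conv([-1,0]^n\cup[0,1]^n)$ (which attains $g(\Z^n,1)=2^{n+1}-2$), the $2^{n+1}-2$ vertices split into $2^n-1$ parity classes of size $2$, \emph{all} of which share the single midpoint $0$; the vertex set is globally invariant under $x\mapsto -x$. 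So no convexity argument can exclude massive midpoint coincidence, and your count ``$a$ midpoints must concentrate onto at most $k$ lattice points'' gives no contradiction when $a=2^n$ and $k=3$ without substantial further structure. This is precisely why the paper does \emph{not} use parity for $k\le 3$: instead it sets $Q:=\conv(X)$ with $X$ the non-vertex lattice points, observes $\nverts{Q}\le 3$ hence $\dim Q\le 2$, and applies Lemma~\ref{lemUpperBoundGeneralDataQ}, which bounds $\nverts{P}$ by decomposing $\aff(Q)$ into cones over the facets of $Q$ and splitting the remaining vertices by a generic hyperplane containing $\aff(Q)$; this yields $\nverts{P}\le 2(2^n-4)+3\cdot 2=2^{n+1}-2$ when $Q$ is a triangle, and $2h(\Z^n)-2$ when $\dim Q\le 1$.

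The parity argument is used in the paper only for the single remaining sub-case $k=4$ with $Q$ a tetrahedron, and even there it needs two structural facts absent from your sketch: (i) a class whose elements are congruent mod~$2$ to a vertex of $Q$ contains at most one vertex of $P$ (otherwise one produces three collinear or four coplanar non-vertex lattice points, contradicting $|X|=4$ with $Q$ a tetrahedron), and (ii) at most four classes can have three vertices, because the midpoint triangle of such a class must be a facet of $Q$ and a triangle is determined by its edge midpoints. These give the count $4\cdot 1+4\cdot 3+(2^n-8)\cdot 2=2^{n+1}$. Two smaller points: your claim that $g(\Z^n,1)=g(\Z^n,2)=2^{n+1}-2$ is ``already available'' is acceptable (the literature gives the $c$-values and Theorem~\ref{thmPolytopalDescr} converts them), and your lower-bound constructions are plausible but unverified as stated --- in particular the existence, for all $n$, of a translate $t$ making all $2^{n+1}$ points of $\{0,1\}^n\cup(t+\{0,1\}^n)$ extreme with exactly four extra lattice points is asserted rather than shown; the paper instead uses concrete prism constructions $\conv((P'\times[0,1])\cup(X'\times[-1,2]))$ whose vertex and lattice-point counts are immediate.
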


\subsection*{Applications.} Starting from \eqref{eqPolytInterp}, one can use straightforward (but somewhat tedious) arguments for approximating polytopes by strictly convex sets in order to provide the following description of $c(S,k)$:

\begin{cor} \label{corStrictlyConvex}
	Let $S \subseteq \R^n$ be discrete and let $k \in \N_0$. Then $c(S,k)$ is the maximum number
	of points of~$S$ lying in the boundary of a strictly convex body that contains exactly~$k$ points of~$S$ in its
	interior.
\end{cor}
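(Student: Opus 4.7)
The plan is to combine the polytopal representation \eqref{eqPolytInterp} with a two-way translation between polytopes in $\cP(S)$ and strictly convex bodies. Write $M$ for the maximum number in the statement of the corollary.

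For the inequality $c(S,k) \ge M$, let $K$ be a strictly convex body with $\card{\interior(K) \cap S}=k$ and $\card{\bd(K) \cap S}=r$, and set $P := \conv(S \cap K)$. Since $P \subseteq K$, one has $S \cap P = S \cap K$ and $\card{S \cap P} = k+r$. Strict convexity makes every boundary point of $K$ an extreme point of $K$, so every point of $\bd(K) \cap S$ is in particular a vertex of $P$; hence $\nverts{P} \ge r$ and $\card{\nonverts{P}{S}} \le k$. Then \eqref{eqPolytInterp} gives $c(S,k) \ge \card{S \cap P} - k = r$, and maximizing over $K$ yields $c(S,k) \ge M$.

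For the reverse inequality, fix $P \in \cP(S)$ attaining the maximum in \eqref{eqPolytInterp}, set $m := k - \card{\nonverts{P}{S}} \ge 0$, choose any $V' \subseteq \vertices(P)$ with $\card{V'} = m$, and let $V'' := \vertices(P) \setminus V'$, so that $\card{V''} = c(S,k)$. The goal is to build a strictly convex body $K$ with $\bd(K) \cap S = V''$ and $\interior(K) \cap S = V' \cup \nonverts{P}{S}$, which will witness $M \ge c(S,k)$. The construction proceeds in two stages. Stage one produces a strictly convex body $K_0 \supseteq P$ that is Hausdorff-close to $P$ and satisfies $\vertices(P) \subseteq \bd(K_0)$; by discreteness of $S$ and compactness of $P$, sufficient closeness forces $S \cap K_0 = S \cap P$, which automatically places $\nonverts{P}{S}$ in $\interior(K_0)$. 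Stage two enlarges $K_0$ locally near each $v \in V'$ by attaching a tiny strictly convex cap outside a supporting hyperplane of $K_0$ at $v$; taken small enough, these caps move each such $v$ strictly inside the new body, preserve strict convexity, and capture no further $S$-points.

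The genuine technical work is the smoothing in stage one: $K_0$ must carry no line segment on its boundary while still passing through every vertex of $P$. This is a standard but fiddly piece of convex geometry; one realization is to inductively replace each positive-dimensional face of $P$ by a suitably bulged strictly convex patch that agrees with $P$ along the boundary of that face, and then to round corners near $\vertices(P)$ while keeping the vertices themselves as extreme points of $\bd(K_0)$. Once $K_0$ is in place, stage two is elementary, and combining the two inequalities gives the corollary.
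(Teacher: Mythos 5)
Your overall strategy is exactly the one the paper gestures at (it states that the corollary follows from \eqref{eqPolytInterp} by ``straightforward (but somewhat tedious)'' approximation arguments and gives no further details), and your first direction is complete and correct: strict convexity makes every point of $\bd(K)\cap S$ an extreme point of $K$, hence a vertex of $P=\conv(S\cap K)$, so $P$ is admissible in \eqref{eqPolytInterp} and yields $c(S,k)\ge r$. The bookkeeping in the reverse direction ($\card{V''}=c(S,k)$, $\card{V'}+\card{\nonverts{P}{S}}=k$) is also right.

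The problem is stage two. If $K_0$ is strictly convex and you ``attach a cap'' outside a supporting hyperplane at $v\in V'$, the supporting hyperplane meets $K_0$ only at $v$, so to get a convex set back you must take the convex hull of $K_0$ with the cap; that convex hull has line segments on its boundary (the bridge between $\bd(K_0)$ and the cap), so strict convexity is \emph{not} preserved, contrary to what you assert. This step, as written, fails. The repair is to collapse the two stages into one: never put $V'$ on the boundary in the first place, i.e.\ construct directly a strictly convex body $K$ with $P\subseteq K\subseteq P+\eps B$ and $\bd(K)\cap P=V''$. A concrete realization avoiding the ``fiddly'' inductive bulging: take $K$ to be the intersection of finitely many large Euclidean balls --- for each $w\in V''$ a ball $B(w-Ru_w,R)$ with $u_w$ in the interior of the normal cone of $P$ at $w$ and $R$ large, which contains $P$ and meets $P$ only at $w$; plus, for each facet direction, a large ball containing $P$ in its interior but contained in a slightly shifted supporting halfspace, to keep $K$ within $P+\eps B$. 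A finite intersection of balls is automatically strictly convex, each $w\in V''$ lies on $\bd(K)$, every other point of $P$ (including all of $V'$ and $\nonverts{P}{S}$) lies in $\interior(K)$, and discreteness of $S$ plus $K\subseteq P+\eps B$ gives $S\cap K=S\cap P$. With that substitution your argument goes through; lower-dimensional $P$ needs the same treatment after also shaving along $\aff(P)$.
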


In view of this corollary, the equality $c(\Z^n,1) = 2^{n+1} - 2$ from Theorem~\ref{thmSpecialValues} implies an old result of Minkowski \cite[Thm.~30.2]{Gruber2007} saying that every $0$-symmetric strictly convex body with exactly one interior integer point contains at most $2^{n+1} - 1$ integer points in total. (Here the strict convexity is a crucial assumption; see \cite{GonzalezHenze14} for an analogous investigation without this assumption.) Thus, Corollary~\ref{corStrictlyConvex} shows that the study of $c(\Z^n,k)$ can be viewed as a research in geometry of numbers dealing with the case of strictly convex bodies.


Yet another interpretation of $c(S,k)$ provides a link to the cutting plane theory for mixed-integer optimization problems:
\begin{thm} \label{thmMaximalSets}
    Let $S \subseteq \R^n$ be discrete, let $k \in \N_0$ and $c(S,k)<\infty$. Then $c(S,k)$ is the maximum number
    of facets of an $n$-dimensional polyhedron that contains exactly~$k$ points of~$S$ in its interior and is
    inclusion-maximal with respect to this property.
\end{thm}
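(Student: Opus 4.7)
Denote by $m(S,k)$ the maximum appearing in the statement; the plan is to show $c(S,k)=m(S,k)$ by establishing the two inequalities separately.

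\emph{Upper bound $m(S,k) \le c(S,k)$.}
Let $Q$ be an inclusion-maximal $n$-dimensional polyhedron containing exactly $k$ points of $S$ in its interior, and write it as an irredundant intersection $Q = \bigcap_{i=1}^{f} H_i$ of its $f$ facet-defining closed halfspaces. Full-dimensionality of $Q$ yields $\interior(Q) = \bigcap_{i=1}^{f} H_i^\circ$, where $H_i^\circ := \interior(H_i)$; these open halfspaces are convex sets, and their common intersection contains exactly $k$ points of $S$. Applying the definition of $c(S,k)$ to the family $(H_i^\circ)$ produces a subset $I \subseteq \{1,\dots,f\}$ with $|I| \le c(S,k)$ and $\card{\bigcap_{i \in I} H_i^\circ \cap S} = k$. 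The polyhedron $Q' := \bigcap_{i \in I} H_i$ then contains $Q$ (so is full-dimensional), and $\interior(Q') = \bigcap_{i \in I} H_i^\circ$ contains exactly $k$ points of $S$. Inclusion-maximality forces $Q' = Q$, and irredundancy of the facet-defining halfspaces then forces $I = \{1,\dots,f\}$, giving $f \le c(S,k)$.

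\emph{Lower bound $c(S,k) \le m(S,k)$.}
The plan is to construct an inclusion-maximal polyhedron with exactly $c(S,k)$ facets out of the polytopal description. By \eqref{eqPolytInterp}, pick $P \in \cP(S)$ attaining $c(S,k) = \card{S \cap P} - k$ with $r := \card{(S \cap P) \setminus \vertices(P)} \le k$, so that $\card{\vertices(P)} = c(S,k) + k - r$. Fix a subset $V \subseteq \vertices(P)$ of cardinality $c(S,k)$, and set $U := \vertices(P) \setminus V$ and $N := (S \cap P) \setminus \vertices(P)$; then $\card{U \cup N} = k$. For each $v \in V$, choose a supporting hyperplane $h_v$ of $P$ at $v$ whose outer normal lies in the relative interior of the normal cone of $P$ at $v$, so that $h_v \cap P = \{v\}$; letting $H_v$ denote the closed halfspace bounded by $h_v$ and containing $P$, this ensures $(S \cap P) \setminus \{v\} \subseteq H_v^\circ$. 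Define $Q := \bigcap_{v \in V} H_v$, so that $P \subseteq Q$ and $U \cup N \subseteq \interior(Q)$. The main technical claim is that for sufficiently generic choices of the normals, $Q$ is full-dimensional, each $H_v$ contributes a distinct facet of $Q$, and $\interior(Q) \cap S = U \cup N$; this is the main obstacle and would be proved by a perturbation argument exploiting discreteness of $S$ to keep the finitely many ``near-miss'' $S$-points near $P$ out of $\interior(Q)$.

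It remains to verify inclusion-maximality of such a $Q$. Each $v \in V$ lies in the relative interior of the facet of $Q$ cut out by $h_v$ (since $v \in H_{v'}^\circ$ for every $v' \in V$ with $v' \neq v$), so $h_v$ is the unique supporting hyperplane of $Q$ at $v$. Now let $Q' \supseteq Q$ be any polyhedron with exactly $k$ interior $S$-points. Because $\interior(Q') \supseteq \interior(Q) \supseteq U \cup N$ already has $k$ elements, $\interior(Q') \cap S = U \cup N$, and hence no $v \in V$ lies in $\interior(Q')$; thus $V \subseteq \partial Q'$. Any supporting hyperplane of $Q'$ at $v$ is also a supporting hyperplane of $Q$ at $v$ (as $Q \subseteq Q'$) and therefore equals $h_v$, forcing $Q' \subseteq H_v$. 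Intersecting over $v \in V$ yields $Q' \subseteq \bigcap_{v \in V} H_v = Q$, so $Q' = Q$, and $Q$ is inclusion-maximal. This gives $c(S,k) \le m(S,k)$ and completes the plan.
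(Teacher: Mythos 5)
Your upper-bound direction ($m(S,k)\le c(S,k)$) is correct, and it is in fact a cleaner route than the paper's: because the statement already restricts to polyhedra, you can apply Definition~\ref{defQuantHelly} directly to the open facet-defining halfspaces and let inclusion-maximality plus irredundancy force $I=\{1,\dots,f\}$, whereas the paper (working with the a priori larger class of inclusion-maximal \emph{convex} sets) has to approximate $M$ from inside by polytopes, invoke Lemma~\ref{lemRestrictionToClosedHalfSpaces} for each, and pass to a limit by a compactness argument. That part stands.

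The lower bound, however, has a genuine gap exactly at the step you flag as the ``main technical claim'', and that claim is the crux rather than a routine perturbation. Your $Q=\bigcap_{v\in V}H_v$ is an intersection of only $\card{V}$ halfspaces, each touching $P$ in a single vertex. First, $Q$ may be forced to be unbounded: each admissible normal $a_v$ must lie in the normal cone of $P$ at $v$, and if $V\subsetneq\vertices(P)$ these cones need not positively span $\R^n$; an unbounded $Q$ typically has infinitely many points of $S$ in its interior (e.g.\ for $S=\Z^n$), so there is no finite set of ``near-miss'' points to perturb away. Second, even when $Q$ is bounded, every point in the relative interior of a facet $F$ of $P$ lies in $\bigcap_{v\in V}H_v^{\circ}\subseteq\interior(Q)$, so $Q$ contains a full neighbourhood of $\relint(F)$ and hence a region strictly beyond $\aff(F)$; a point of $S$ sitting just beyond $F$ must then be cut off by some $H_v$ anchored at a (possibly distant) vertex, and a single $a_v$ has to do this simultaneously for all facets through $v$ --- there is no argument that such a choice exists. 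The paper avoids constructing the maximal polyhedron explicitly: it first enlarges $P$ slightly to a polytope $Q$ with $Q\cap S=P\cap S$ (possible by discreteness, since only finitely many points of $S$ lie near $P$) such that $\bd(Q)\cap S=\{v_1,\dots,v_t\}$ with each $v_i$ in the relative interior of a distinct facet of $Q$; it then invokes Zorn's lemma (Proposition~\ref{propEnlarging}) to obtain \emph{some} inclusion-maximal $M\supseteq Q$, and only argues that $M$ has \emph{at least} $t$ facets, because each $v_i$ must lie on $\bd(M)$ and its unique supporting hyperplane $\aff(F_i)$ yields a distinct facet of $M$. Combined with the upper bound, ``at least $t=c(S,k)$ facets'' suffices, so you never need to control $\interior(M)\cap S$ beyond knowing its cardinality is $k$. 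I recommend replacing your explicit construction by this enlarge-then-Zorn argument.
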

A discussion of the connections with the cutting plane theory and inclusion-maximal convex sets with $k$ interior points in $S$ is postponed to Appendix~\ref{sectMaximalFree}.

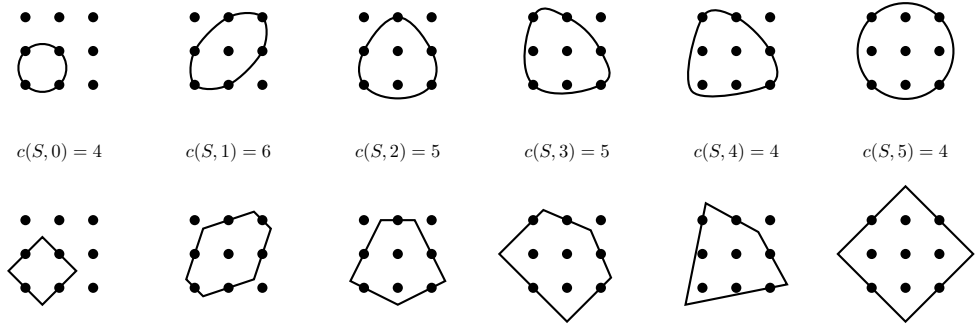
\begin{figure}
	\begin{tikzpicture}[scale=0.45, every node/.style={scale=0.7}]
	\def\radius{0.15}
	\tikzstyle{line} = [thick];
	
	
	\foreach \dx in {0, 5, 10, 15, 20, 25} {
		\foreach \x\y in {0/0, 0/1, 0/2, 1/0, 1/1, 1/2, 2/0, 2/1, 2/2} {
			\fill (\x + \dx, \y) circle (\radius);
		}
	}
	
	\draw[line] plot [smooth cycle, tension=1] coordinates {(0,0) (1,0) (1,1) (0,1)};
	\draw[line] plot [smooth cycle, tension=0.6] coordinates {(5,0) (6,0) (7,1) (7,2) (6,2) (5,1)};
	\draw[line] plot [smooth cycle, tension=0.7] coordinates {(10,0) (11,-0.4) (12,0) (12,1) (11,2) (10,1)};
	\draw[line] plot [smooth cycle, tension=0.8] coordinates {(15,0) (17,0) (17,1) (16,2) (15,2)};
	\draw[line] plot [smooth cycle, tension=0.6] coordinates {(19.7,-0.2) (22,0) (22,1) (21,2) (20,2)};
	\draw[line] plot [smooth cycle, tension=1] coordinates {(25,0) (27,0) (27,2) (25,2)};
	
	\node at ( 1,-2) {$ c(S,0) = 4 $};
	\node at ( 6,-2) {$ c(S,1) = 6 $};
	\node at (11,-2) {$ c(S,2) = 5 $};
	\node at (16,-2) {$ c(S,3) = 5 $};
	\node at (21,-2) {$ c(S,4) = 4 $};
	\node at (26,-2) {$ c(S,5) = 4 $};
	
	
	\foreach \dx in {0, 5, 10, 15, 20, 25} {
		\foreach \x\y in {0/0, 0/1, 0/2, 1/0, 1/1, 1/2, 2/0, 2/1, 2/2} {
			\fill (\x + \dx, \y - 6) circle (\radius);
		}
	}
	
    \draw[line] (0.5,-4.5) -- (1.5,-5.5) -- (0.5,-6.5) -- (-0.5,-5.5) -- cycle;
    \draw[line] (4.75,-5.75) -- (5.25,-6.25) -- (6.75,-5.75) -- (7.25,-4.25) -- (6.75,-3.75) -- (5.25,-4.25) -- cycle;
    \draw[line] (9.6,-5.8) -- (11,-6.5) -- (12.4,-5.8) -- (11.5,-4) -- (10.5,-4) -- cycle;
    \draw[line] (14,-5) -- (16,-7) -- (17.3,-5.7) -- (16.7,-4.3) -- (15.3,-3.7) -- cycle;
    \draw[line] (19.5,-6.5) -- (22.5,-5.9) -- (21.65,-4.35) -- (20.1,-3.5) -- cycle;
    \draw[line] (26,-3) -- (28,-5) -- (26,-7) -- (24,-5) -- cycle;
	\end{tikzpicture}
	\caption{Values of $ c(S,k) $ for $ S = \{0,1,2\}^2 $ and $ k \in \{0,\dotsc,5\} $ together with strictly convex
	bodies (see Corollary~\ref{corStrictlyConvex}) and inclusion-maximal polytopes (see Theorem~\ref{thmMaximalSets})
	attaining these values.}
	\label{figureCNumbers}
\end{figure}

Clearly, the number $\max \{c(S,0),\ldots,c(S,k)\}$ can be defined by replacing `exactly $k$ points' with `at most $k$ points' in Definition~\ref{defQuantHelly}. This number has recently been introduced in  \cite[Def.~1.8]{DeLoeraLHRS15} and coincides with $\h_S(k+1)$ in the notation of \cite{DeLoeraLHRS15}. The inequalities in~\eqref{boundsCthrougG} imply that $\h_S(k+1)$ can be described as
\[
	\h_S(k+1) = \max \{c(S,0),\ldots,c(S,k)\} = \max \{ g(S,0),\ldots,g(S,k)\}.
\]
That is, $\h_S(k+1)$ is the maximum of $\nverts{P}$ taken over polytopes $P \in \cP(S)$ satisfying $\card{\nonverts{P}{S}} \le k$. \citeauthor{DeLoeraLHRS15}~\cite{DeLoeraLHRS15} also introduced the so-called quantitative $S$-Tverberg
number $\T_S(m,k)$ and provided the bound  $\T_S(m,k)\le \h_S(k)(m-1)kn+k$ on $\T_S(m,k)$ in terms of $\h_S(k)$. Combining the latter bound with our upper bounds on $c(S,k)$ we immediately obtain upper bounds on $\T_S(m,k)$ for general discrete sets $S$ as well as $S=\Z^n$.

\subsection*{Open questions} We formulate several questions that arise  naturally.

\begin{enumerate}[label=\arabic*.]
	\item It is not clear whether sublinearity of $c(\Z^n,k)$ in $k$ relies on specific properties of~$\Z^n$ or can be derived directly from the fact $h(\Z^n)<\infty$. This leads to the following question. If $S \subseteq \R^n$ is an arbitrary discrete set with $h(S)< \infty$, can the linear bound $c(S,k) = \orderO(k)$ be improved to a sublinear one?
	\item One has $c(\Z^n,k) = \orderO(2^n)$ for every fixed $k \in \N_0$ and $c(\Z^n,k) = \orderO(k^{\frac{n-1}{n+1}})$ for every fixed $n \in \N$.  What is the behavior of $c(\Z^n,k)$ when both $k$ and $n$ vary? Does one have $c(\Z^n,k) \le C \cdot 2^n \cdot k^{\frac{n-1}{n+1}}$ for all $k,n \in \N$, where $C>0$ is some absolute constant?
	\item The example $S=\{0,1,2\}^2$ shows that $c(S,k)$ and $g(S,k)$ are not
	the same in general; see Figures~\ref{figGNumbers} and \ref{figureCNumbers}. It is however not clear under which conditions on $S$ one has the equality $c(S,k)=g(S,k)$ for every $k \in \N_0$. In particular, for $S=\Z^n$, the following question is open. Do there exist $n \in \N$ and $k \in \N_0$ with $c(\Z^n,k) > g(\Z^n,k)$?
	Note that, in view of Theorem~\ref{thmSpecialValues}, one has $c(\Z^n,k) =g(\Z^n,k)$ for $n \ge 2$ and $k \le 4$.
	
	If for some $n \in \N$ there exists $k \in \N$ with $c(\Z^n,k) > g(\Z^n,k)$, then for the smallest such value $k=k'$ one has $c(\Z^n,k') > g(\Z^n,k')$ and $c(\Z^n,k'-1) = g(\Z^n,k'-1)$ so that \eqref{eqCRecursion} yields $g(\Z^n,k') < g(\Z^n,k'-1) - 1$. This means that passing from $k=k'-1$ to $k=k'$ the value $g(\Z^n,k)$ decreases by at least two. Thus, the question can be reformulated as follows: Do there exist $n \in \N$ and $k \in \N$ such that $g(\Z^n,k) \le g(\Z^n,k-1) - 2$?
	
	Relying on Castryck's database~\cite[Rem.~(3.4)]{Castryck2012} of all integral polygons with at most $ 30 $ interior integer points, we determine $g(\Z^2,k)$ for
	all $k \le 30$ in a straightforward way; see Figure~\ref{figPlanarValues}. Our computation verified that $g(\Z^2,k) \ge g(\Z^2,k-1)-1$ holds for every $k \in \{1,\ldots,30\}$. This implies that $g(\Z^2,k)$ and $c(\Z^2,k)$ coincide for $k \le 30$. The computation also determined that, in the given range, $g(\Z^2,k)$ is not monotonic, since the equality $g(\Z^2,k) = g(\Z^2,k-1) - 1$ is attained for $k \in \{5,8,11,22,28\}$ (for the case $k=5$ see also \cite{ChestnutHZ15}).
\end{enumerate}

\begin{figure}
    \begin{tikzpicture}[scale=0.4, every node/.style={scale=0.7}]
        \tikzstyle{line} = [thick];
        \def\transp{70};

        \node at (15,1.5) {$ k $};
        \node at (-2.25,9) {$ g(\Z^2,k) $};
        \foreach \x in {0,...,30}
        {
            \draw[thin,black] (\x,3) -- (\x,14);
            \node at (\x,2.5) {$ \x $};
        }
        \foreach \y in {4,...,14}
        {
            \draw[thin,black] (0,\y) -- (30,\y);
            \node at (-0.7,\y) {$  \y $};
        }

        \draw[blue!100,line width=2.2pt] (0,4) -- (1,6) -- (2,6) -- (3,6) -- (4,8) -- (5,7) -- (6,8) -- (7,9) -- (8,8) -- (9,8) -- (10,10) --
            (11,9) -- (12,9) -- (13,10) -- (14,10) -- (15,10) -- (16,10) -- (17,11) -- (18,11) -- (19,12) -- (20,12) --
            (21,12) -- (22,11) -- (23,11) -- (24,12) -- (25,12) -- (26,12) -- (27,13) -- (28,12) -- (29,12) -- (30,13);

        \draw (-1,3) -- (31,3);
    \end{tikzpicture}
    \caption{The values of $ g(\Z^2,k) = c(\Z^2,k) $ for all $ k \in \{0,\dotsc,30\} $.}
    \label{figPlanarValues}
\end{figure}


\subsection*{Structure, notation and terminology} In Sections~\ref{sectPolytopal}--\ref{sectSpecificValues} we present proofs of Theorems~\ref{thmPolytopalDescr}--\ref{thmSpecialValues}, respectively. Appendix~\ref{sectMaximalFree} presents material related to Theorem~\ref{thmMaximalSets}, while in  Appendix~\ref{secAppendixA} we determine an explicit constant in a result of Andrews \cite{Andrews63} that is used in the proof of Theorem~\ref{thmLowerAndUpperBoundIntegers}.

We use the notation $\N_0 := \{0,1,2,\ldots\}$ and $\N:=\{1,2,3,\ldots\}$. Let $[m]:=\{1,\ldots,m\}$ for $m \in \N$ and $[0]:=\emptyset$. For a
set $S$, we denote by $\card{S}$ its cardinality.

We use basic background from the theory of polytopes, convex sets and geometry of numbers; see, for example, \cite{Barvinok02,Gruber2007,GruberL1987,Schneider14}. Throughout, $ n \in \N $ stands for the dimension of the ambient space $\R^n$, which is equipped with the standard inner product $\sprod{\cdot}{\cdot}$.
For a polyhedron $P\subseteq\R^n$, we denote by $\vertices(P)$ and $\facets(P)$
the set of vertices and facets of $P$, respectively, where a \emph{facet} is a non-empty face of $P$ of dimension $\dim(P)-1$. Note that $\facets(P) = \emptyset$ if $P$ consists of one point only. For $ S \subseteq \R^n $, we denote by $\cP(S)$ the family of all polytopes $P$ with $\vertices(P) \subseteq S$. We denote by
$\interior(C)$, $\bd(C)$, $\aff(C)$, and $\conv(C)$ the interior,  the boundary, the affine hull, and the convex hull of $C \subseteq \R^n$, respectively.
We say that $K\subseteq\R^n$ is a \emph{convex body} if $K$ is an $n$-dimensional compact convex set. By $\vol(K)$ we denote the volume of a convex body $K$.
\section{Polytopal interpretation of \texorpdfstring{$c(S,k)$}{c(S,k)}}
\label{sectPolytopal}

In this section, we prove Theorem~\ref{thmPolytopalDescr}.
In our argumentation, we will make use of the fact that one can define $ c(S,k) $ equivalently by restricting $
C_1,\dotsc,C_m $ in Definition~\ref{defQuantHelly} to closed halfspaces.
%

\newcommand{\cH}{\mathcal{H}}
\begin{lem}
    \label{lemRestrictionToClosedHalfSpaces}
    Let $ S $ be a subset of $ \R^n $ (not necessarily a discrete one), let $ k \in \N_0 $ and $t \in \N_0$. Then the following two conditions are equivalent:
    \begin{itemize}
        \item[(i)] If convex sets $C_1,\ldots,C_m$ $(m \in \N)$ have exactly $k$ points of $S$ in common, then there
        exists some $ I \subseteq [m] $ with $ \card{I} \le t $ such that the sets $C_i$ with $i \in I$ already have exactly~$k$
        points of $S$ in common.
        \item[(ii)] If closed halfspaces $H_1,\ldots,H_m$ $(m \in \N)$ have exactly $k$ points of $S$ in common, then
        there exists some $ I \subseteq [m] $ with $ \card{I} \le t $ such that the sets $H_i$ with $i \in I$ already have
        exactly $k$ points of $S$ in common.
    \end{itemize}
\end{lem}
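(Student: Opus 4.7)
The direction (i) $\Rightarrow$ (ii) is immediate: every closed halfspace is a convex set, so the conclusion of (i) applied to a family of closed halfspaces directly yields (ii). For (ii) $\Rightarrow$ (i), the plan is to associate to the configuration of convex sets $C_1, \ldots, C_m$ a finite configuration of closed halfspaces whose $S$-intersection is also $P$, in a way that lets a halfspace subcollection be pulled back to a subcollection of the $C_i$'s.

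Concretely, given convex $C_1, \ldots, C_m$ with $\bigcap_{i=1}^m C_i \cap S = P$ and $|P|=k$, I would first replace each $C_i$ by $\tilde C_i := \conv(C_i \cap S)$. Since $C_i$ is convex, $\tilde C_i \subseteq C_i$, and it is easy to check that $\tilde C_i \cap S = C_i \cap S$. Hence $\bigcap_i \tilde C_i \cap S = \bigcap_i (C_i \cap S) = P$. When each $C_i \cap S$ is finite, $\tilde C_i$ is a polytope, admitting a representation $\tilde C_i = \bigcap_{j \in J_i} H_{i,j}$ as a finite intersection of closed halfspaces. The combined finite family $\{H_{i,j}\}$ has intersection $\bigcap_i \tilde C_i$, hence $S$-intersection $P$. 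I would then apply (ii) to this family to obtain a subfamily indexed by some $K$ with $|K| \le t$, and take $I := \{i : (i,j) \in K \text{ for some } j\}$, which satisfies $|I| \le |K| \le t$.

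To verify $\bigcap_{i \in I} C_i \cap S = P$, the inclusion $\supseteq$ is clear. For $\subseteq$, given $q \in S \setminus P$, some halfspace $H_{i,j}$ in the chosen subfamily excludes $q$, so $q \notin \tilde C_i$ since $\tilde C_i \subseteq H_{i,j}$; combined with $\tilde C_i \cap S = C_i \cap S$ and $q \in S$, this forces $q \notin C_i$, and hence $q \notin \bigcap_{i' \in I} C_{i'}$.

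The main obstacle is the general case when some $C_i \cap S$ is infinite, so that $\tilde C_i$ need not be a polytope. I would address this by approximating $\tilde C_i$ from inside by a finite-vertex polytope $\tilde C_i' \subseteq \tilde C_i$ containing $P$, chosen so that (a) every $q \in S \setminus P$ that is strictly separable from $\overline{\tilde C_i}$ by a closed halfspace remains separable from $\tilde C_i'$, and (b) $\bigcap_i \tilde C_i' \cap S = P$. The possibly remaining points $q \in \bigcap_i \overline{\tilde C_i}$, which are not strictly separable from any $\tilde C_i$ by closed halfspaces, would have to be handled via direct inclusion of a $C_i$ with $q \notin C_i$ in the subcollection; arranging this without exceeding the bound $t$ is the most delicate part of the argument.
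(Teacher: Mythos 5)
Your reduction to the polyhedral case is sound and matches the first half of the paper's own argument: once each set is a finite intersection of closed halfspaces, one pools all the halfspaces, applies (ii) to the pooled family, and pulls each surviving halfspace back to an index $i$ whose set it contains. The genuine gap is exactly where you flag it: when some $C_i \cap S$ is infinite (which the hypotheses allow, since $S$ need not be discrete and the $C_i$ may be unbounded), $\conv(C_i\cap S)$ need not be a polyhedron, and your proposed inner approximation is never carried out. As sketched it would not obviously close: the points of $S\setminus P$ that fail to be strictly separable from your approximants could be infinite in number and could each demand that a further $C_i$ be adjoined to the subcollection, with no control on the total relative to $t$; your conditions (a) and (b) are stated as desiderata rather than established.

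The paper closes this gap with a different finite reduction that avoids approximation entirely. Let $X=\bigcap_{i=1}^m C_i\cap S$ and choose $I\subseteq[m]$ inclusion-minimal with $\bigcap_{i\in I}C_i\cap S=X$; it suffices to show $\card{I}\le t$. Minimality yields, for each $i\in I$, a witness $s_i\in S$ lying in every $C_j$ with $j\in I\setminus\{i\}$ but not in $C_i$. Now replace each $C_i$ with $i\in I$ by the polytope $C_i':=\conv\bigl(X\cup\setcond{s_j}{j\in I\setminus\{i\}}\bigr)$, the convex hull of a \emph{finite} point set (the $k$ points of $X$ plus at most $\card{I}-1$ witnesses), regardless of how large $C_i\cap S$ is. One checks that $C_i'\subseteq C_i$, that $\bigcap_{i\in I}C_i'\cap S=X$, and that every proper subfamily of the $C_i'$ has strictly larger intersection with $S$ because it retains some witness $s_i\notin X$. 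Applying the already-established polyhedral case to this family forces the selected subfamily to be all of $I$, whence $\card{I}\le t$. The idea you are missing is that one only needs to preserve the criticality structure of a minimal subfamily, not the geometry of the sets $C_i$ themselves.
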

\begin{proof}
	In our proof we borrow ideas from \cite{AverkovW12}. As closed halfspaces are special convex sets, it is clear that~(i) implies~(ii). We verify the converse. Assume that (ii) is fulfilled. Consider convex sets $C_1,\ldots,C_m$ as in (i) such that the set
    \[
	    X := \bigcap_{i =1}^m C_i \cap S
    \]
    has cardinality $k$. We first verify (i) in the case that $C_1,\ldots,C_m$ are polyhedra. In this case each $C_i$ with $i \in [m]$ is the intersection of a finite family $\cH_i$ of closed halfspaces. By construction, $\bigcap_{H \in \cH} H \cap S = X$ for $\cH:=\cH_1 \cup \ldots \cup \cH_m$. Applying (ii) to the family $\cH$ we deduce the existence of $\cH' \subseteq \cH$ with $\card{\cH'} \le t$ such that $\bigcap_{H' \in  \cH'} H' \cap S = X$. Since $\card{\cH'} \le t$ and each $H' \in \cH'$ contains some $C_i$ with $i \in [m]$, there exists an index set $I \subseteq [m]$ with $\card{I} \le t$ such that $\bigcap_{i \in I} C_i \cap S \subseteq \bigcap_{H' \in \cH'} H' \cap S$.  The left-hand side of the latter inclusion contains $X$, while the right-hand side coincides with $X$. Consequently, $\bigcap_{i \in I} C_i \cap S = X$.

    It remains to verify (i) for general convex sets $C_1,\ldots,C_m$. Let $I \subseteq [m]$ be an inclusion-minimal set with $\bigcap_{i \in I} C_i \cap S = X$. We need to verify $\card{I} \le t$. The minimality of $I$ implies that for every $i \in I$ there exists $s_i \in S$ that belongs to $C_j$ with $j \in I \setminus \{i\}$ but does not belong to $C_i$. Consider the polytopes $C_i':= \conv(X \cup \setcond{s_j}{j \in I \setminus \{i\}})$ for $i\in I$. By construction,  the set $\bigcap_{i \in I} C_i' \cap S$ coincides with $X$, while for every proper subset $J$ of $I$, the  set $\bigcap_{j \in J} C_i' \cap S$ is strictly larger than $X$. Since (i) has already been verified for polyhedra, it follows that $\card{I} \le t$.
\end{proof}
\begin{proof}[Proof of Theorem~\ref{thmPolytopalDescr}]
	For verifying \eqref{eqPolytopalDescr}, it suffices to verify \eqref{eqPolytInterp}, as both representations are equivalent. We denote by $m$ the right-hand side of \eqref{eqPolytInterp}.
	
	We first show $c(S,k) \ge m$. Consider an arbitrary polytope $P$ as in \eqref{eqPolytInterp} and let  $t = |S \cap P| -k$. We need to show $c(S,k) \ge t$. One has $t \ge 0$ and the polytope $P$ has at least~$t$ vertices. We choose $t$ pairwise distinct vertices $v_i$ with $i \in [t]$ of $P$ and introduce the sets $C_i :=\conv(S \cap P \setminus \{v_i\})$. By construction, the set $X:=\bigcap_{i=1}^t C_i \cap S = S \cap P \setminus \{v_1,\ldots,v_t\}$ has cardinality $t$. On the other hand, for every proper subset $I$ of $[t]$, the set $\bigcap_{i \in I} C_i \cap S$ strictly contains $X$ as it additionally contains points $v_i$ with $i \in [t] \setminus I$. This shows $c(S,k) \ge t$. 

    Next, we show $ c(S,k) \le m $. We distinguish the cases
    $c(S,k)=-\infty$, $c(S,k)=0$, $c(S,k)\in\N$, and $c(S,k)=\infty$.
    
    If $c(S,k)=-\infty$ there is nothing to prove. The case $c(S,k)=0$
    readily implies $\card{S}=k$ (note that the intersection of an empty family of convex subsets of $\R^n$ is the whole space~$\R^n$). In this case, The polytope $P=\conv(S)$ satisfies $\card{S\cap P\setminus \vertices(P)} \le k=\card{S\cap P}$.
    Therefore, $m\geq0$.
	
		Now, we consider the case $c(S,k) \in \N$ and let $t=c(S,k)$. By Lemma~\ref{lemRestrictionToClosedHalfSpaces}, there exist $a_1,\ldots,a_t \in \R^n$ and $\beta_1,\ldots,\beta_t \in \R$ such that the system
		\begin{equation} \label{eqOriginalSystem}
			\sprod{a_1}{x} \le \beta_1,\ldots,\sprod{a_t}{x} \le \beta_t
		\end{equation}
		of $t$ inequalities has exactly $k$ solutions in $S$ and such that, for every $i \in [t]$, there exists a point $s_i \in S$ that satisfies all but the $i$-th inequality of \eqref{eqOriginalSystem}. Let $X$ be the set of  solutions of~\eqref{eqOriginalSystem} that lie in $S$. We introduce the finite subset $S':= \conv(X \cup \{s_1,\ldots,s_t\}) \cap S$ of~$S$ and consider $t$ parameters $\gamma_1,\ldots,\gamma_t \in \R$. We first choose  $\gamma_i$ to be slightly larger than~$\beta_i$ for every $i \in [t]$. Since $S'$ is finite, with this choice, the system
		\begin{equation}
		\label{eqNewSystem}
			\sprod{a_1}{x} < \gamma_1,\ldots,\sprod{a_t}{x} < \gamma_t
		\end{equation}
		of $t$ strict inequalities has the same set $X$ of solutions in $S'$ as the system \eqref{eqOriginalSystem}. Furthermore, for each $i \in [t],$ the point $s_i \in S'$ satisfies all but the $i$-th inequality of \eqref{eqNewSystem}. Now, for each $i \in [t]$, we can enlarge $\gamma_i$ until we reach a value for which there exists a point $s_i' \in S' \setminus X$ that satisfies all but the $i$-th inequality of \eqref{eqNewSystem} and also satisfies the equality $\sprod{a_i}{s_i'} = \gamma_i$. Adjusting the values of $\gamma_1,\ldots,\gamma_t$ consecutively using the above procedure, we ensure that for the points $s_1',\ldots,s_t' \in S'$ one has $\sprod{a_i}{s_j'} < \gamma_i$ and $\sprod{a_i}{s_i'} = \gamma_i$ for all $i, j \in [t]$ with $i \ne j$. Thus the points $s_1',\ldots,s_t'$ are $t$ distinct vertices of the polytope $P=\conv(X \cup \{s_1',\ldots,s_t'\})$ and $P$ contains exactly $\card{X} + t=k+t$ points of $S$ in total. Thus, $P$ occurs in the set on the right-hand side of \eqref{eqPolytInterp} and for $P$ one has $|S \cap P| - k = t$. Consequently, $c(S,k) = t \le m$. 
		
		In the case $c(S,k) = \infty$, no choice of $t \in \N$ fulfills the property in Definition~ \ref{defQuantHelly}. Thus, we can apply the arguments of the previous case for every $t \in \N$. This yields $t \le m$ for every $t \in \N$. Hence $m=\infty$. 
		
		Above, we have verified the validity \eqref{eqPolytInterp} (and by this, also \eqref{eqPolytopalDescr}).
		
		We verify the equivalence of $c(S,k) > -\infty$ and $k \le |S|$. One obviously has $c(S,k) = -\infty$ if $k > |S|$. If $k \le |S|$ we can choose $k$ distinct points $s_1,\ldots,s_k \in S$. Then  $S' = \conv(\{s_1,\ldots,s_k\}) \cap S$ is a finite set consisting of at least $k$ points. Clearly, there exists a hyperplane separating exactly $k$ of these points from the remaining ones. This yields the existence of a convex set with exactly $k$ points in $S$ and concludes the proof of the equivalence. 
		
		It remains to verify the recursive representation \eqref{eqCRecursion} under the condition $k \le |S|$. The equality $c(S,0) = g(S,0)$ is a special case of \eqref{eqPolytopalDescr}. Let $0 < k \le |S|$. One has $c(S,k) > -\infty$, which implies $c(S,k) \ge 0$. Thus, the maximum in \eqref{eqPolytopalDescr} is non-negative so that one can omit the condition $g(S,\ell) + \ell - k \ge 0$ and represent $c(S,k)$ as the maximum of $g(S,\ell) + \ell - k$ taken over  $\ell \in \{0,\ldots,k\}$.  Analogously, $c(S,k-1)$ is also non-negative and can be given as the maximum of $g(S,\ell) + \ell - (k-1)$ taken over $\ell \in \{0,\ldots,k-1\}$. From these representations of $c(S,k)$ and $c(S,k-1)$, \eqref{eqCRecursion} follows immediately. 
\end{proof}

\section{An upper bound for general discrete sets}
\label{sectUpperBoundGeneral}
\noindent
In this section, we prove Theorem~\ref{thmUpperBoundGeneral}.
To illustrate our  approach for deriving the upper bound in Theorem~\ref{thmUpperBoundGeneral}, we first present a proof of the weaker bound
\begin{align}
c(S,k) &\le (k+1) h(S) \label{eqnWeakgBound}
\end{align}
for every $k \in \N$ and every non-empty discrete subset $S$ of $\R^n$. Since the right-hand side of \eqref{eqnWeakgBound} is non-decreasing in $k$ and $c(S,k)$ can be bounded in terms of $g(S,0),\ldots,g(S,k)$, it suffices to prove the bound $g(S,k) \le (k+1) h(S)$. Consider an arbitrary $P \in \cP(S)$  with $X =S \cap P \setminus \vertices(P)$ satisfying $\card{X}=k$. Let $X = \{s_1,\ldots,s_k\}$. For a vector $u \in \R^n \setminus \{0\}$ and $i \in [k]$, we denote by $H_i$ the hyperplane orthogonal to $u$ and passing through $s_i$. If $u \in \R^n \setminus \{0\}$ is chosen generically, the hyperplanes $H_1,\ldots,H_k$ are pairwise distinct and one has $H_i \cap \vertices(P) = \emptyset$ for every $i \in [k]$. The hyperplanes $H_1,\ldots,H_k$ decompose $\R^n$ into $k+1$ polyhedral regions: Two closed halfspaces and $k-1$ slabs (where a slab is the convex hull of two distinct parallel hyperplanes). We index these regions by $1,\ldots,k+1$. For $i \in [k+1]$, by $P_i$ we denote the convex hull of those points of $S \cap P$ that are contained in the $i$-th region. By construction $S \cap P_i = \vertices(P_i)$ and $\vertices(P) \subseteq \bigcup_{i=1}^n 
\vertices(P)$. Consequently, 
\[
	\nverts{P} \le \sum_{i=1}^{k+1} \,\nverts{P_i} \le (k+1) h(S).
\]

The rest of this section is devoted to proving Theorem~\ref{thmUpperBoundGeneral} which improves the bound~\eqref{eqnWeakgBound} by roughly a factor of~$1/2$.
In Lemma~\ref{lemUpperBoundGeneralDataQ}, given below, we estimate the number of vertices of a polytope $P\in\cP(S)$ using the combinatorics of the polytope $Q:=\conv(S \cap P \setminus \vertices(P))$.  We apply Lemma~\ref{lemUpperBoundGeneralDataQ} in the cases $0 \le \dim(Q) \le 2$ for proving  Theorem~\ref{thmUpperBoundGeneral} in this section and Theorem~\ref{thmSpecialValues} in Section~\ref{sectSpecificValues}.

\begin{lem} \label{lemUpperBoundGeneralDataQ}
	Let $S \subseteq \R^n$ be discrete. Let $P\in\cP(S)$ be such that $Q:=\conv(\nonverts{P}{S})$ is non-empty and let $S':= \aff(Q) \cap S$. Then
	\[
		\nverts{P} \le 2 \left( h(S) - h(S') \right) + \card{\facets(Q)} \left( h(S') - \dim(Q) \right).
	\]
	In particular, if $\dim(Q) \le 1$, one has 
	\begin{equation}
		\label{eqBoundWhenDimQ01}
		\nverts{P} \le 2 h(S) - 2.
	\end{equation}
\end{lem}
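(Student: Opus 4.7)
The plan is to cover $V := \vertices(P)$ by a family of auxiliary polytopes: two living in $\cP(S)$ that account for the $2(h(S) - h(S'))$ term, and one per facet of $Q$ living in $\cP(S')$ that accounts for the $|\facets(Q)|(h(S') - d)$ term. Throughout, write $X := \nonverts{P}{S}$, $H := \aff(Q)$, $d := \dim(Q)$ and $V_H := V \cap H$. A first observation is that $V \cap Q = \emptyset$: vertices of $P$ are extreme points of $P$, but $Q = \conv(X) \subseteq P$ and $V \cap X = \emptyset$ by definition, so no vertex can lie in $Q$. In particular $V_H \subseteq H \setminus Q$. The degenerate case $d = n$ gives $S' = S$ and reduces to the facet-indexed bound alone (applied in the ambient $\R^n$), handled analogously to the $V_H$-part below, so I assume $d < n$ from here on.

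For the two $h(S)$ terms, choose a hyperplane $\Pi$ of $\R^n$ containing $H$ generically so that $V \cap \Pi = V_H$, and split $V \setminus H = V^+ \sqcup V^-$ according to the strict sides of $\Pi$. For each $\sigma \in \{+,-\}$, I would build a polytope $P^\sigma \in \cP(S)$ of the form $\conv(V^\sigma \cup W)$, where $W \subseteq S'$ is chosen so that $\vertices(P^\sigma) = V^\sigma \sqcup W$ and $P^\sigma$ has no non-vertex $S$-points, with $|W| = h(S')$. A side-of-$\Pi$ argument gives $\vertices(P^\sigma) \supseteq V^\sigma$ and $P^\sigma \cap \Pi = \conv(W)$, and the key requirement on $W$ is that it be the vertex set of an $S'$-extremal polytope in $H$ containing $Q$ (so that all $X$-points are absorbed as vertices of $\conv(W)$). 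Granted this, $|V^\sigma| + h(S') = |\vertices(P^\sigma)| \le h(S)$.

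For the facet-indexed bound on $V_H$, fix a generic interior point $q_0 \in \relint(Q)$; for each $v \in V_H$ the segment $[q_0, v]$ exits $Q$ through a unique facet $F(v)$ of $Q$, yielding a partition $V_H = \bigsqcup_{F \in \facets(Q)} V_H^F$. For each $F$, set $P_F := \conv(V_H^F \cup \vertices(F)) \subseteq H$. The canonical choice of $F(v)$ ensures that $P_F \in \cP(S')$ has vertex set $V_H^F \sqcup \vertices(F)$ and contains no other $S'$-point of $P$, so $|V_H^F| + |\vertices(F)| \le h(S')$. Since every facet of the $d$-polytope $Q$ has at least $d$ vertices, $|V_H^F| \le h(S') - d$.

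Summing gives $|V| \le 2(h(S) - h(S')) + |\facets(Q)|(h(S') - d)$. The in-particular statement follows at once: for $d = 0$, $|\facets(Q)| = 0$ and $h(S') = h(\{q\}) = 1$, while for $d = 1$, $|\facets(Q)| = 2$; in both cases the formula collapses to $2h(S) - 2$. The \emph{main technical obstacle} I anticipate is the construction of $W$: an $S'$-extremal polytope achieving $h(S')$ vertices and simultaneously containing all of $Q$ need not exist for arbitrary $S$ (since $|X|$ can exceed $h(S')$). To handle this, one will likely need to either allow $P^\sigma$ to carry controlled non-vertex $S$-points and absorb them via a double-counting or inductive argument, or to replace $W$ by a different local vertex structure and tighten the bound through a separate combinatorial identity.
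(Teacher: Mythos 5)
Your decomposition is the same as the paper's---a generic hyperplane through $\aff(Q)$ splitting off two halves of $\vertices(P)$, plus a conical subdivision of $\aff(Q)$ from a generic interior point of $Q$ indexed by the facets of $Q$---but both components of your argument have gaps, only one of which you flag, and neither of which is resolved. In the facet-indexed part, taking $P_F=\conv(V_H^F\cup\vertices(F))$ does not in general produce a polytope with $P_F\cap S'=\vertices(P_F)$: since $P_F\supseteq F$ and $F$ is a facet of $Q=\conv(X)$, any point of $X\cap F$ that is not a vertex of $F$ (for instance a lattice point in the relative interior of an edge of $Q$) is a non-vertex $S'$-point of $P_F$, and then $\nverts{P_F}\le h(S')$ does not follow from $h(S')=g(S',0)$, which only bounds polytopes with \emph{no} non-vertex points (recall $g(S',1)$ can exceed $g(S',0)$). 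The paper sidesteps this by replacing $\vertices(F_i)$ with a subset $X_i\subseteq F_i\cap S$ of cardinality exactly $\dim(Q)$ chosen so that $\conv(X_i)\cap S=X_i$; the resulting pieces $P_i'=\conv(V_i'\cup X_i)$ then genuinely satisfy $P_i'\cap S'=\vertices(P_i')$ and give $\card{V_i'}\le h(S')-\dim(Q)$.

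The more serious gap is the one you acknowledge: a set $W\subseteq S'$ with $\card{W}=h(S')$, $\conv(W)\supseteq Q$, and $P^{\sigma}=\conv(V^{\sigma}\cup W)$ free of non-vertex $S$-points cannot exist once $\card{X}>h(S')$, and the repair you gesture at (``double-counting or inductive argument'') is precisely the crux of the lemma rather than a routine patch. The paper's resolution is to drop both requirements on $W$: it sets $W=\vertices(P_1')$ where $P_1'$ is the cone-piece with the largest vertex count $m:=\max_i\nverts{P_i'}$. Then $P_\ell=\conv(V_\ell\cup W)$ satisfies $S\cap P_\ell=\vertices(P_\ell)$, so $\card{V_\ell}\le h(S)-m$, while each cone contributes $\card{V_i'}\le m-\dim(Q)$. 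The resulting total $2\left(h(S)-m\right)+\card{\facets(Q)}\left(m-\dim(Q)\right)$ has nonnegative coefficient $\card{\facets(Q)}-2$ in front of $m$, so substituting $m\le h(S')$ yields the claimed bound. Without this (or an equivalent) mechanism your proof does not close.
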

\begin{proof}
We distinguish several cases according to the dimension of $Q$. Consider the case $\dim(Q)=0$, that is, $Q$ consists of one point.
For a generically chosen hyperplane $H$ that contains $Q$ one has $H \cap \vertices(P) = \emptyset$.
The hyperplane $H$ splits $\vertices(P)$ into two disjoint subsets $V_1$ and $V_2$ lying on different sides of $H$.
The polytopes $P_{\ell}:=\conv(V_\ell \cup Q)$ with $\ell \in \{1,2\}$ satisfy $S \cap P_\ell = \vertices(P_\ell) = V_\ell \cup Q$.
Consequently,
\[
\nverts{P} = \card{V_1} + \card{V_2}  = \nverts{P_1} + \nverts{P_2} - 2 \le 2 h(S)-2.
\]

We switch to the case $1\leq\dim(Q)\leq n-1$. Let $t:=\nverts{Q}$, $s:=\card{\facets(Q)}$ and $V':=\vertices(P) \cap \aff(Q)$.
	Let $F_1,\ldots,F_s$ be all facets of $Q$.  
	Consider a point~$a$ in the relative interior of $Q$. The affine space $\aff(Q)$ can be subdivided into $s$ polyhedral cones, where for each $i \in [s]$, the $i$-th cone is defined as the union of all rays emanating from $a$ and passing through points of $F_i$. Choosing the point $a$ in the relative interior of $Q$ generically, we ensure that none of the above cones contains points of $V'$ in their relative boundary.  
		
	In the $i$-th cone we pick a subset $X_i$ of~$F_i \cap S$ of cardinality $\dim(Q)$ satisfying $\conv(X_i)\cap S=X_i$. Let $V_i'$ be the set of all points of $V'$ contained in the $i$-th cone and 
	let 
	\[
		P_i':=\conv(V_i' \cup X_i).
	\]
	By construction,~$P_i'$ satisfies $P_i' \cap S' = \vertices(P_i') = V_i' \cup X_i.$
	We also introduce 
	\[m:= \max \setcond{\nverts{P_i'}}{i \in [s]}.\] Without loss of generality, let $m=\nverts{P_1'}$.
	
	Consider a hyperplane $H$ that contains $\aff(Q)$. Choosing such a hyperplane  $H$ generically, we ensure $H \cap \vertices(P) = V'$. The hyperplane $H$ splits $\vertices(P) \setminus V'$ into two disjoint subsets, say $V_1$ and $V_2$.
	For each $\ell \in \{1,2\}$, the polytope 
	\[
		P_\ell := \conv( V_\ell \cup \vertices(P_1'))
	\] has the property $S \cap P_\ell = \vertices(P_\ell) = V_\ell \cup \vertices(P_\ell')$.
	Taking into account $\nverts{P_\ell} \le h(S)$, for $\ell \in\{1,2\}$, and $m \le h(S')$, and $s\geq2$, we obtain the desired bound:
	\begin{align*}
		\nverts{P}
		& = \card{V_1} + \card{V_2} + \sum_{i=1}^s \card{V_i'}
		\\ & \le (\nverts{P_1} - m) +  (\nverts{P_2} -m) + s (m-\dim(Q))
		\\ & = \nverts{P_1} + \nverts{P_2} + (s-2) m - s \dim(Q)
		\\ & \le 2 h(S) + (s-2) h(S') - s\dim(Q)
		\\ & = 2 (h(S) -h (S')) + \card{\facets(Q)}\left(h(S')-\dim(Q)\right).
	\end{align*}
The case $\dim(Q)=n$ is analogous to the previous one with the exception that no separating hyperplane $H$ needs to be introduced. 
In the notation from the previous case, we have $V'=\vertices(P)$ and obtain 
\[
 \nverts{P} = \sum_{i=1}^s \card{V_i'} \leq s(m-n) \leq \card{\facets(Q)}\left(h(S)-n\right).\qedhere
\]
\end{proof}

\begin{proof}[Proof of Theorem~\ref{thmUpperBoundGeneral}]
In view of \eqref{boundsCthrougG}, it suffices to verify the asserted inequality for $g(S,k)$ in place of $c(S,k)$.
    We assume $ n \ge 2 $ as for $ n = 1 $, one has $ g(S,k) = h(S) = 2 $ and the assertion is trivial.
    %
    We proceed by induction on $k$. The case $ k = 0 $ corresponds to the inequality $ g(S,0) \le h(S) $, which holds (with equality) in view of Theorem~\ref{thmPolytopalDescr}.
	It follows directly from Lemma~\ref{lemUpperBoundGeneralDataQ}, that $ g(S,k) \le 2 h(S) - 2 $, for $ k \in \{1,2\} $. This inequality is precisely what we need to prove in these cases.
    %

    Next, let $k \ge 3$ and assume that $ g(S,k')
    \le \floor{(k'+1)/2} (h(S)-2) + h(S) $ has been verified for every $ k' \in \{0,\ldots,k-1\}$.
    Consider an arbitrary polytope $ P $ with $ \vx(P) \subseteq S $ and such that the set $ X:= \nonverts{P}{S} $ consists of exactly $k$ points.
    Since $k \ge 3$, the polytope $\conv(X)$ has dimension at least one.

    We pick an edge $E$ of $\conv(X)$ (where $E= \conv(X)$ if $\conv(X)$ is one-dimensional) and two consecutive points $s_1$ and $s_2$ of $S$ lying in $E$. Since $E$ is one-dimensional, the set 
    \[
	    V' := \aff(E) \cap \vertices(P)
    \] consists of at most two points. A generically chosen hyperplane $H$ with $H \cap \conv(X) = E$ satisfies $H \cap \vertices(P) = V'$. The hyperplane $H$ splits $\vertices(P) \setminus V'$ into two sets, say $V_1$ and~$V_2$, where we assume that~$V_2$ lies on the same side of $H$ as $\conv(X)$.
    By construction, 
    \[
      P_1 := \conv(V_1 \cup \{s_1,s_2\})
    \] 
    is a polytope with $S \cap P_1 = \vertices(P_1) = V_1 \cup \{s_1,s_2\}$. Furthermore, 
    \[
	    P_2 := \conv(V_2 \cup \{s_1,s_2\})
    \] is a polytope with $V_2 \cup \{s_1,s_2\} = \vertices(P_2)$. One has $S \cap P_2 \setminus \vertices(P_2) \subseteq X \setminus \{s_1,s_2\}$ and so $k' := \card{\nonverts{P_2}{S}} \le k-2$.

    Summarizing, we obtain the desired bound on $\nverts{P}$ using the induction assumption:
    \begin{align*}
    \nverts{P} &  = \card{V_1} + \card{V_2} + \card{V'}
    \\ & \le (\nverts{P_1}-2) + (\nverts{P_2}-2) + 2
    \\ & = \nverts{P_1} + \nverts{P_2} -2
    \\ & \le h(S) + \floor{(k'+1)/2} (h(S)-2) + h(S) -2
    \\ & = \left(\floor{(k'+1)/2} + 1 \right) (h(S) - 2) + h(S)
    \\ & \le \floor{(k+1)/2} (h(S)-2) + h(S).\qedhere
    \end{align*}
\end{proof}

\section{Bounds for the integer lattice}
\label{sectBoundsIntegers}
\noindent
In this section, we prove Theorem~\ref{thmLowerAndUpperBoundIntegers}.
%
%
\noindent
We first give a proof for the upper bound.
To this end, let us recall three results in geometry of numbers.
The first one relates the number of vertices of an integral polytope to its volume.
\begin{thm}[\citeauthor{Andrews63}~\cite{Andrews63}]
    \label{thmAndrews}
    For every $ n \in \N $ there exists a constant $ \alpha(n) $ such that for every $ n $-dimensional polytope $ P
    \in \cP(\Z^n) $ one has
    \[
        \nverts{P} \le \alpha(n) \cdotp \vol(P)^{\frac{n-1}{n+1}}.
    \]
    The constant can be chosen as $ \alpha(n) = (3n)^{4n} $.
\end{thm}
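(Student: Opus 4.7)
The plan is to make Andrews' classical bound effective in $n$ by carefully tracking the constants through a standard cap-based proof. The core idea is to associate to each vertex of $P$ a local \emph{cap} whose size is constrained by lattice geometry, and to aggregate the volumes of these caps into a bound on the vertex count.

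First, I would reduce to the situation where $P \cap \Z^n = \vertices(P)$. If $P$ contains non-vertex lattice points, a controlled perturbation of the defining inequalities produces a lattice polytope of comparable volume whose vertex set contains $\vertices(P)$, so the restricted case implies the general one.

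Second, for each vertex $v$ of $P$, I would select a hyperplane $H_v$ separating $v$ from the remaining vertices of $P$ and consider the cap $C_v = P \cap H_v^+$, where $H_v^+$ is the halfspace containing only $v$ among the vertices. After translating $v$ to the origin, Minkowski's first theorem applied to the centrally symmetric body $(C_v - v) \cup (v - C_v)$ yields the upper bound $\vol(C_v) \le 2^{n-1}$, provided the cap is chosen so that it contains no other lattice point of $P$. A matching lower bound $\vol(C_v) \ge c_n t_v^n$ in terms of the cap's depth $t_v$ follows by inscribing a suitable cone inside the cap using the local geometry at $v$. By choosing the depths $t_v$ small enough, the caps can be made pairwise disjoint, yielding $\sum_v \vol(C_v) \le \vol(P)$. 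Optimizing $t_v$ across all $v$ subject to the lattice-emptiness constraint produces the inequality with exponent $(n-1)/(n+1)$.

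The main obstacle is keeping the constants explicit to fit within $\alpha(n) = (3n)^{4n}$. Each use of Minkowski's theorem introduces a factor up to $2^n$, the volume-to-depth conversion brings in $n!$ or $n^n$, and the packing argument ensuring cap disjointness contributes further $n^{O(n)}$ factors. Only by invoking sharp forms of each estimate and carefully balancing the free parameter in the optimization can one arrive at the stated explicit constant.
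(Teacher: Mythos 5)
Your central step is false, and it fails exactly where Andrews' theorem is hard. You claim that if the cap $C_v$ at a vertex $v$ contains no lattice point of $P$ other than $v$, then Minkowski's first theorem applied to $(C_v-v)\cup(v-C_v)$ gives $\vol(C_v)\le 2^{n-1}$. First, that union is not convex, and passing to its convex hull (or to the difference body $C_v-C_v$) can introduce many new lattice points, so Minkowski's theorem does not apply as you use it. Second, and more fundamentally, no bound of this kind holds: for the Reeve simplex $T_N=\conv\bigl(\{0,e_1,e_2,e_1+e_2+Ne_3\}\bigr)\in\cP(\Z^3)$, whose only lattice points are its four vertices, the cap $C_v=T_N\cap\{x_3\ge 1/2\}$ at $v=e_1+e_2+Ne_3$ contains no lattice point besides $v$ and yet has volume about $N/6$, which is unbounded. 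A convex set containing exactly one lattice point can have arbitrarily large volume; this is precisely the obstruction that forces every known proof of Andrews' theorem to use heavier machinery (Andrews' original decomposition, induction on dimension via a flatness theorem, or the economic cap covering combined with induction on volume). Moreover, even if one granted disjoint caps with $c_nt_v^n\le\vol(C_v)\le 2^{n-1}$, summing volumes yields at best a bound \emph{linear} in $\vol(P)$, since each cap contributes at most a constant; your closing sentence about ``optimizing $t_v$'' names the exponent $(n-1)/(n+1)$ but supplies no mechanism by which it could arise. The preliminary reduction to $P\cap\Z^n=\vertices(P)$ is also both unnecessary (the statement concerns arbitrary lattice polytopes) and unsubstantiated as described.

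You should also note that the paper does not reprove Andrews' theorem: it cites the existence statement from Andrews' article and, in an appendix, merely traces the constants $\kappa(n)$, $\gamma(n)$, $\kappa'(n)$, $c_1(n)$, $\xi(n)$ through that original proof, estimating them elementarily to confirm that $\alpha(n)=(3n)^{4n}$ is an admissible choice. Since the only content of the statement beyond the classical theorem is this explicit value of $\alpha(n)$, a proof attempt must either carry out that bookkeeping or give a complete self-contained argument with all constants tracked; your sketch does neither.
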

\noindent
We also refer to \cite{Barany08} for a discussion of various proofs of Theorem~\ref{thmAndrews} available in the literature. A discussion of the choice of $\alpha(n)$ is given in Appendix~\ref{secAppendixA}.
Second, we need an upper bound on the volume of a convex body in terms of the so-called coefficient of asymmetry.
Given a convex body $ K \subseteq \R^n$ and $ x \in \interior(K) $, the \emph{coefficient of asymmetry} of $ K $ with respect to $ x $
can be defined as
\begin{equation*}
    \ac(K,x) := \min \setcond{ \lambda \ge 0 }{ x - K \subseteq \lambda(K - x) },
\end{equation*}
see \cite{Gruenbaum63}. Using the cancelation law for Minkowski addition of convex bodies (see \cite[p.~48]{Schneider14}), the latter can be reformulated as
\begin{equation*}
   \ac(K,x) =  \min \setcond{ \lambda \ge 0 }{ x + \frac{1}{\lambda + 1} (K - K) \subseteq K }.
\end{equation*}

\noindent \citeauthor{lagariasziegler1991bounds}~\cite{lagariasziegler1991bounds} observe that the following result can be derived from an inequality of van der Corput; see also \cite[p.~47 \& p.~127]{GruberL1987}.
\begin{thm}[\citeauthor{lagariasziegler1991bounds}~\cite{lagariasziegler1991bounds}]
    \label{thmMahler}
    Let $ K \subseteq \R^n $ be a convex body and let $ x \in \Z^n \cap \interior(K) $. Then, one has
    \[
        \vol(K) \le \left(1 + \ac(K,x)\right)^n \cdotp |\Z^n \cap \interior(K)|.
    \]
\end{thm}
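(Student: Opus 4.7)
The plan is to deduce the bound from two classical ingredients: the Brunn--Minkowski inequality and van der Corput's generalization of Minkowski's first theorem, glued together by the reformulation of the asymmetry coefficient given right above the theorem.

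Set $\lambda := \ac(K,x)$ and consider the centrally symmetric convex body
\[
C := \tfrac{1}{\lambda+1}(K - K).
\]
The second formula for $\ac(K,x)$ in the excerpt says precisely that $x + C \subseteq K$. Applying Brunn--Minkowski to $K$ and $-K$ yields $\vol(K-K)^{1/n} \ge 2\vol(K)^{1/n}$, whence
\[
\vol(C) \;\ge\; \frac{2^n}{(\lambda+1)^n}\,\vol(K).
\]
Since $C$ is a $0$-symmetric convex body, van der Corput's inequality (in the form for interior lattice points; cf.\ \cite[p.~47 \& p.~127]{GruberL1987}) gives
\[
\vol(C) \;\le\; 2^n \, \bigl|\Z^n \cap \interior(C)\bigr|.
\]
Finally, because $x \in \Z^n$ and $x + \interior(C) \subseteq \interior(K)$, the translation $y \mapsto y + x$ embeds $\Z^n \cap \interior(C)$ into $\Z^n \cap \interior(K)$, so $|\Z^n \cap \interior(C)| \le |\Z^n \cap \interior(K)|$. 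Chaining the three estimates and dividing by $2^n/(\lambda+1)^n$ produces the desired inequality.

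The only delicate point is the invocation of van der Corput in the interior version: the classical statement bounds $\vol(C)$ in terms of lattice points in the closed body $C$, and one has to use (or cite) the standard strengthening that works with $\interior(C)$, which is needed here so that the subsequent inclusion $x + \interior(C) \subseteq \interior(K)$ can be used without worrying about boundary lattice points of $K$. Apart from this, the argument is a short three-line chain of inequalities.
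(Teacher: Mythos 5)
Your proof is correct and follows exactly the derivation the paper indicates: the paper states Theorem~\ref{thmMahler} without proof, citing it as a consequence of van der Corput's inequality, and your chain (Brunn--Minkowski for $\vol(K-K)$, van der Corput applied to the $0$-symmetric body $\tfrac{1}{\lambda+1}(K-K)$, then translation by $x\in\Z^n$ using $x+C\subseteq K$) is precisely that standard argument. The caveat you raise about the interior version of van der Corput is handled by applying the Blichfeldt--van der Corput theorem to the open set $\interior(C)$, which has the same volume as $C$, so there is no gap.
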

\noindent
Finally, we will make use of the following version of flatness theorems:
\begin{thm}[{\cite[Thm.~8.3]{Barvinok02}}]
    \label{thmFlatness}
    For every $ n \in \N $ there exists a constant $ \flt(n) $ such that for every convex body $ K \subseteq \R^n $ with
    $ K \cap \Z^n = \emptyset $ there exists a vector $ u \in \Z^n \setminus\{0\}$ with
    \[
        w(K, u) := \max_{x \in K} \sprod{x}{u}  - \min_{x \in K} \sprod{x}{u} \le \flt(n).
    \]
    The constant can be chosen as $ \flt(n) = n^{5/2} $.
\end{thm}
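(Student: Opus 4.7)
The plan is to derive the flatness theorem in two stages: first reduce from a general convex body to an ellipsoid via John's theorem, then handle lattice-free ellipsoids using the first successive minimum and a transference inequality from the geometry of numbers.

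\textbf{Step 1: Reduction to ellipsoids.} By John's theorem, every convex body $K \subseteq \R^n$ admits an ellipsoid $E$ and a common center $c$ such that $E - c \subseteq K - c \subseteq n(E - c)$. Since lattice widths are translation invariant and monotone under inclusion, for every $u \in \Z^n \setminus \{0\}$ one has $w(E, u) \le w(K, u) \le n \cdot w(E, u)$. It therefore suffices to establish the flatness bound with constant $n^{3/2}$ for ellipsoids (for which $E \cap \Z^n = \emptyset$ is inherited from $K \cap \Z^n = \emptyset$), losing the remaining factor of $n$ in this reduction.

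\textbf{Step 2: Set-up for ellipsoids.} Write $E = c + A B_2^n$ with $A \in GL_n(\R)$, where $B_2^n$ is the Euclidean unit ball. A direct computation gives $w(E, u) = 2\|A^T u\|_2$, and hence
\[
    \min_{u \in \Z^n \setminus \{0\}} w(E, u) = 2\,\lambda_1(A^{-T} B_2^n, \Z^n),
\]
the first successive minimum of the polar ellipsoid with respect to $\Z^n$. Meanwhile, the condition $E \cap \Z^n = \emptyset$ is equivalent to the covering radius $\mu(A B_2^n, \Z^n)$ being strictly larger than $1$.

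\textbf{Step 3: Transference and conclusion.} Invoke a transference inequality of the form $\lambda_1(C^\circ, \Z^n) \cdot \mu(C, \Z^n) \le f(n)$, valid for every centrally symmetric convex body $C$ and its polar $C^\circ$. For ellipsoids such an inequality can be extracted from Minkowski's second theorem applied to $A B_2^n$ together with the fact that the polar of an ellipsoid is again an ellipsoid, producing a bound of order $f(n) = O(n^{3/2})$. Applying this to $C = A B_2^n$ yields $\lambda_1(A^{-T} B_2^n) \le f(n)/\mu(A B_2^n) < f(n)$, and combining with Step~1 gives $\min_u w(K, u) \le 2n \cdot f(n) = O(n^{5/2})$. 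Careful bookkeeping of the constants in John's theorem, Minkowski's second theorem, and the transference inequality then pins down the explicit value $\flt(n) = n^{5/2}$.

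\textbf{Main obstacle.} The crux is Step~3, the transference inequality linking the covering radius of a lattice-free body to a successive minimum of the polar. The sharpest modern forms (Banaszczyk's transference via discrete Gaussian measures) are available but technically involved and would in fact yield a better exponent; for the $n^{5/2}$ constant in the statement, a classical Khinchin--Mahler-type transference derived from Minkowski's second theorem is sufficient, but extracting the explicit constant requires careful accounting through the entire reduction chain.
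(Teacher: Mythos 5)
The paper does not prove this statement at all: Theorem~\ref{thmFlatness} is imported verbatim from Barvinok's book, and the authors only remark on their preference for the explicit constant $n^{5/2}$. So the relevant question is whether your blind outline would actually reconstruct a proof. Your Steps~1 and~2 are the standard (and correct) reductions: John's theorem costs a factor of $n$ and reduces everything to lattice-free ellipsoids, and for an ellipsoid $E=c+AB_2^n$ the minimal width is $2\lambda_1(A^{-T}B_2^n,\Z^n)$ while lattice-freeness forces $\mu(AB_2^n,\Z^n)>1$. This is indeed the architecture of the textbook proof.

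The gap is in Step~3, and it is not merely a matter of ``careful bookkeeping.'' Minkowski's second theorem applied to $C=AB_2^n$ and to $C^\circ=A^{-T}B_2^n$, combined with the volume product $\vol(C)\vol(C^\circ)=\omega_n^2$ for ellipsoids (where $\omega_n=\vol(B_2^n)$), gives only
\[
\prod_{i=1}^n \lambda_i(C,\Z^n)\,\lambda_i(C^\circ,\Z^n)\;\le\;\frac{4^n}{\omega_n^2}\;\sim\;\Bigl(\tfrac{2n}{\pi e}\Bigr)^{n}\cdot n\pi ,
\]
and since the lower bound $\lambda_i(C)\lambda_{n+1-i}(C^\circ)\ge 1$ is all that controls the individual factors, the resulting transference bound $\lambda_1(C^\circ)\,\lambda_n(C)\le 4^n/\omega_n^2$ is \emph{exponential} in $n$, not $O(n^{3/2})$. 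A Khinchin--Mahler-type transference derived this way therefore cannot deliver $f(n)=O(n^{3/2})$, and your chain breaks at its crux. To get a polynomial transference for ellipsoids one needs a genuinely different argument: either the inductive projection argument (project along a shortest vector of the dual lattice and recurse, which is how Barvinok and Lagarias--Lenstra--Schnorr obtain $\mu(C)\lambda_1(C^\circ)=O(n^{3/2})$), or Banaszczyk's Gaussian/theta-function method, which even gives $O(n)$. Until one of these is supplied, the proposal does not prove the theorem.
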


While flatness theorems providing better choices of $\flt(n)$ in the sense of asymptotic behavior are available in the literature, we prefer the bound $n^{5/2}$, because this bound is explicit and our subsequent estimates are not very sensitive with respect to the exponent $5/2$. The exponent $5/2$ can be replaced by a smaller one at the cost of introducing an unknown multiplicative constant.
\begin{proof}[Proof of $ c(\Z^n, k) \le (3n)^{5n} \cdotp k^{\frac{n-1}{n+1}} $ in Theorem~\ref{thmLowerAndUpperBoundIntegers}]
	Since the right-hand side of the asserted inequality is non-decreasing in $k$ and $c(\Z^n,k)$ can be bounded by $g(\Z^n,0),\ldots,g(\Z^n,k)$, it suffices to prove the inequality with $g(\Z^n,k)$ in place of $c(\Z^n,k)$. Let $ \beta(n) := (3n)^{5n} $.
    We proceed by induction on~$n$.
    The inequality is trivial for $ n = 1 $. Let $n \ge 2$ and assume that the inequality has been verified in dimensions $1,\ldots,n-1$.
    Let $ P \subseteq \R^n $ be a polytope with $ \vertices(P) \subseteq
    \Z^n $ and $ \card{\nonverts{P}{\Z^n}} = k $.
    We have to show
    \begin{equation} \label{eqDesiredIneq}
	    \card{\vertices(P)} \le \beta(n) \cdotp k^{\frac{n-1}{n+1}}.
    \end{equation}
    As~$ \beta(n) \cdotp k^{\frac{n-1}{n+1}} $ is non-decreasing in $ n $, in the case $\dim(P) < n$, \eqref{eqDesiredIneq} follows from the induction assumption. Thus, we can assume $\dim(P)=n$.

    It is known and not hard to prove that there exists a point $ x \in \interior(P) $ with $ \ac(P,x) \le n $; see \cite{Gruenbaum63}.
    For example, one can take $x$ to be the center of the maximum volume simplex contained in~$K$.
    We first consider the case that $ P $ contains a ``deep'' integer point, that is, there exists a point $ y \in
    (\tfrac{1}{2}x + \tfrac{1}{2}P) \cap \Z^n $.
    Note that $ y $ is contained in the interior of $ P $ and that it satisfies $ \ac(P,y) \le 2n + 1 $ since
    \begin{align*}
        y + \tfrac{1}{2(n+1)}(P - P) & \subseteq \tfrac{1}{2}x + \tfrac{1}{2}P + \tfrac{1}{2(n+1)}(P - P)
        \\ & = \tfrac{1}{2} P + \tfrac{1}{2} \big(x + \tfrac{1}{n+1}(P - P)\big) \\ & \subseteq
        \tfrac{1}{2} P + \tfrac{1}{2} P = P.
    \end{align*}
    Thus, by Theorem~\ref{thmMahler}, we obtain
    \[
        \vol(P) \le (2n + 2)^n \cdotp \card{\Z^n\cap\interior(P)} \le (2n+2)^n \cdotp k \le (3n)^n \cdotp k,
    \]
    and \eqref{eqDesiredIneq} follows in view of Theorem~\ref{thmAndrews}.

    We are left with the case that $ \frac{1}{2}x + \frac{1}{2}P $ contains no integer point.
    Using Theorem~\ref{thmFlatness} (and its notation), we derive that for some vector $ u \in \Z^n \setminus\{0\}$ one has
    \[
        w(P,u) = 2 \cdot w(\tfrac{1}{2}x + \tfrac{1}{2}P, u) \le 2 \cdot \flt(n).
    \]
    Thus, the set
    \[
	    I:= \setcond{i \in \Z}{ \min_{p \in P} \sprod{p}{u} \le i \le  \max_{p \in P} \sprod{p}{u}}
    \]
    satisfies $|I| \le 2 \cdot \flt(n) + 1$.

    For each $ i \in I $ define $ P_i := \conv( \setcond{p \in P \cap \Z^n}{\sprod{p}{u} = i}) $ and $ k_i :=
    \card{\nonverts{P_i}{\Z^n}} $.
    Note that
    \[
        \nverts{P} \le \sum_{i \in I} \,\nverts{P_i}.
    \]
    Using the induction assumption, we get
    \[
        \nverts{P_i} \le g(n - 1, k_i) \le \beta(n - 1) \cdotp k_i^{\frac{n-2}{n}} \le \beta(n-1) k^{\frac{n-2}{n}}
    \]
    for each $ i \in I $ with $ k_i \ge 1 $, while $ \nverts{P_i} \le 2^{n-1} $ holds for all $ i \in I $ with $ k_i
    = 0 $. This yields $\nverts{P_i} \le (\beta(n-1) + 2^ {n-1} ) k^{\frac{n-2}{n}}$ for every $i \in I$. Consequently,
    \[
	    \nverts{P} \le |I| (\beta(n-1) + 2^ {n-1} ) k^{\frac{n-2}{n}} \le (2\cdot\phi(n) + 1)(\beta(n-1) + 2^ {n-1} ) k^{\frac{n-2}{n}}.
    \]

    \noindent The latter implies \eqref{eqDesiredIneq} since one has $ \frac{n - 2}{n} \le \frac{n - 1}{n + 1} $ and
    \[
        (2 \cdot \flt(n) + 1) \cdotp (\beta(n-1) + 2^{n-1}) \le 3 n^{5/2} \cdotp 2 \beta(n-1)
        \le 6n^3 \cdotp (3n)^{5n - 5} \le (3n)^{5n} = \beta(n).\qedhere
    \]
\end{proof}

\noindent
We now focus on the lower bound in Theorem~\ref{thmLowerAndUpperBoundIntegers}.
%
\begin{proof}[Proof of $ \big\lfloor \left(k/(2n)\right)^{\frac{1}{n+1}} \big\rfloor^{n-1} \le c(\Z^n, k) $ in Theorem~\ref{thmLowerAndUpperBoundIntegers}]
    Consider two parameters $t,s \in \N$, which will be fixed later.
    We compute the number of vertices and the number of integer non-vertices of the integral polytope $P:= \conv(C \cap
    \Z^n)$, where $C$ is a compact convex set given by
    \[
        C := \setcond{(x_1,\dotsc,x_n) \in [1,t]^{n-1} \times \R}{\ell(x_1,\dotsc,x_{n - 1}) \le x_n
            \le u(x_1,\dotsc,x_{n - 1})}
    \]
    with
    \begin{align*}
        \ell(x_1,\dotsc,x_{n-1}) & := \sum_{i=1}^{n-1} (x_i^2 - t^2),
        \\ u(x_1,\dotsc,x_{n-1}) &:= s + \sum_{i=1}^{n-1} (t^2 - x_i^2).
    \end{align*}
    Note that on $ [1,t]^{n-1} $ the function $ \ell $ is strictly smaller than $ u $, the function $ \ell $ is strictly
    convex and the function $ u $ is strictly concave.
    In view of this, a point $(x_1,\dotsc,x_n)$ is a vertex of $ P $ if and only if $ x_1,\dotsc,x_{n - 1} $ all belong
    to $ \{1,\dotsc,t\} $ and $ x_n $ is either equal to $ \ell(x_1,\dotsc,x_{n-1}) $ or equal to $
    u(x_1,\dotsc,x_{n-1}) $.
    Thus, we obtain
    \[
        \nverts{P} = 2 t^{n-1}.
    \]
    Furthermore, a point $ (x_1,\dotsc,x_n)$ belongs to $ P \cap \Z^n $ if and only if $ x_1,\dotsc,x_{n - 1} $ all
    belong to $ \{1,\dotsc,t\} $ and $ x_n $ is an integer value in $ \{\ell(x_1,\dotsc,x_{n - 1}),\dotsc,
    u(x_1,\dotsc,x_{n - 1})\}$.
    Thus, we see that
    \[
        \card{P \cap \Z^n} = \sum_{x_1,\dotsc,x_{n-1} \in [t]} (u(x_1,\dotsc,x_{n-1}) - \ell(x_1,\dotsc,x_{n-1}) + 1),
    \]
    where
    \begin{align*}
        u(x_1,\dotsc,x_{n-1}) - \ell(x_1,\dotsc,x_{n-1}) + 1 = s+ 1 +2 (n-1)  t^2 - 2 \sum_{i=1}^{n-1} x_i^2.
    \end{align*}
    For the determination of $\card{\nonverts{P}{\Z^n}}$ we first note that
    \begin{align*}
        \sum_{x_1,\dotsc,x_{n-1} \in [t]} \sum_{i=1}^{n-1} x_i^2
        & = (n-1) \sum_{x_1,\dotsc,x_{n-1} \in [t]} x_1^2
        \\ & = (n-1) t^{n-2} \sum_{x_1 \in [t]} x_1^2
        \\ & = (n-1) t^{n-2} \cdot \frac{1}{6} t (t+1) (2 t + 1)
        \\ & = \frac{1}{6} (n-1) (t+1) (2 t+1) t^{n-1}.
    \end{align*}
    Hence, we have
    \begin{align*}
        \card{\nonverts{P}{\Z^n}}
        & = \card{P \cap \Z^n} - \nverts{P}
        \\ & = (s + 2 (n-1) t^2 - \frac{1}{3} (n-1) (t+1) (2t+1) - 1) t^{n-1}
        \\ & = \left( s  + \frac{1}{3} (n-1) (4 t^2 - 3 t - 1)  -1 \right) t^{n-1}.
    \end{align*}
    In particular, for $s=1$ one has $\card{\nonverts{P}{\Z^n}} \le 2  n t^{n+1}$.

    Next, we fix $t$ and $s$.
    We choose $t$ to be the largest integer with $2 n  t^{n+1} \le k$, that is,
    \[
        t:=  \floor{\left(\frac{k}{2n}\right)^{\frac{1}{n+1}}}.
    \]
    For the rest of the proof we assume that $k \ge 2n$, since otherwise the asserted inequality is trivial.
    By the choice of $t$, for $s=1$ we have $\card{\nonverts{P}{\Z^n}} \le k$.
    Let $s$ be the largest possible integer, for which the inequality $\card{\nonverts{P}{\Z^n}} \le k$ is
    fulfilled.
    Consider the cardinality
    \[
        k' := \left( s  + \frac{1}{3} (n-1) (4 t^2 - 3 t - 1)  -1 \right) t^{n-1}
    \]
    of $\card{\nonverts{P}{\Z^n}}$.
    Clearly, we have $k' \le k$ by construction.
    Furthermore, $k-k' \le t^{n-1}$.
    In fact, if we had $k-k' > t^{n-1}$, the parameter $s$ could be enlarged by $1$, contradicting the choice of $s$.
    Thus, by construction we have $ k - k' \le t^{n-1}$ and $ g(\Z^n,k') \ge 2 t^{n-1}$, and hence, using
    Theorem~\ref{thmPolytopalDescr}, we get
    \[
        c(\Z^n,k) \ge g(\Z^n,k') + k' - k \ge 2t^{n-1} - t^{n-1} = t^{n-1}.\qedhere
    \]
\end{proof}

\section{Specific values for the integer lattice}
\label{sectSpecificValues}

In this section, we prove Theorem~\ref{thmSpecialValues}. We determine the values $c(\Z^n,k)$ for $k \in \{1,\ldots,4\}$ by computing $g(\Z^n,k)$ for all $k$ in this range. We start by providing lower bounds on $g(\Z^n,k)$: 

\begin{lem} \label{lemGLowerBounds}
	Let $n, k \in \N$. Then $g(\Z^n,k) \ge 2^{n+1} - 2$. Furthermore, if $n \ge 2$, one has $g(\Z^n,4) \ge 2^{n+1}$. 
\end{lem}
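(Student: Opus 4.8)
The plan is to prove both inequalities by exhibiting explicit integral polytopes and reading off their numbers of vertices and non-vertex lattice points. In both constructions the building block is the ``opposite cubes'' polytope $\conv(\{0,1\}^d \cup \{-1,0\}^d)$, whose set of lattice points is exactly $\{0,1\}^d\cup\{-1,0\}^d$, with the origin as its unique non-vertex point and all remaining $2^{d+1}-2$ lattice points as vertices. Taking $d=n$ already settles the case $k=1$ of the first claim.

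For the bound $g(\Z^n,k)\ge 2^{n+1}-2$ with arbitrary $k$, I would pull the two cubes apart along the first coordinate axis and set
\[
	P := \conv\big(\{0,1\}^n \cup (\{-1,0\}^n - (k-1)e_1)\big),
\]
where $e_1$ is the first standard basis vector. The intended picture is that the two cubes now communicate through a ``neck'' lying on the $x_1$-axis: I claim that $P\cap\Z^n$ consists of $\{0,1\}^n$, of $\{-1,0\}^n-(k-1)e_1$, and of the intermediate points $-e_1,\dots,-(k-2)e_1$; that the non-vertex lattice points are exactly the $k$ collinear points $0,-e_1,\dots,-(k-1)e_1$; and that the other $2^{n+1}-2$ lattice points are vertices. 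To verify this I would, for each claimed vertex $v$, produce an explicit linear functional maximized uniquely at $v$ (for instance $x\mapsto\sum_{i\in\operatorname{supp}(v)}x_i$ isolates the corners of the positive cube), and I would rule out extraneous lattice points using the facet-defining inequalities of $P$ in the remaining coordinates: on any facet $\{x_j=\mathrm{const}\}$ with $j\ge 2$ the cross-section is a single cube facet, which pins down its lattice points and shows the neck is ``thin'' off the axis.

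For $g(\Z^n,4)\ge 2^{n+1}$ with $n\ge 2$, I want a polytope realizing the equality case of Lemma~\ref{lemUpperBoundGeneralDataQ} with a two-dimensional $Q$. Let $O\subseteq\R^2$ be the lattice octagon with vertices $(-1,0),(-1,1),(0,2),(1,2),(2,1),(2,0),(1,-1),(0,-1)$, whose only non-vertex lattice points are the four points of the unit square $\{0,1\}^2$ in its interior. I would then ``extrude'' this square in the remaining $n-2$ coordinates by the opposite-cube vertex set and set
\[
	P := \conv\Big( (O\times\{0\}^{n-2}) \ \cup\ \big(\{0,1\}^2\times(\{0,1\}^{n-2}\cup\{-1,0\}^{n-2})\big)\Big).
\]
The design is a ``barrel'': the octagon is the widest cross-section in the plane $\{x_3=\dots=x_n=0\}$, while the extruded cubes supply the remaining vertices in convex position on either side of this plane. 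The square $Q=\{0,1\}^2\times\{0\}^{n-2}$ is simultaneously the interior of the octagonal equator and the center of the extrusion, so I expect exactly these four points to be the non-vertex lattice points. The vertex count is the $8$ octagon vertices together with $4(2^{n-1}-1)-4=2^{n+1}-8$ extruded vertices, for a total of $2^{n+1}$ (for $n=2$ this degenerates to $P=O$).

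The routine but genuinely delicate step—the one I would spend the most care on—is the lattice-point bookkeeping: showing that $P$ contains no integer points beyond those listed and that every claimed vertex is extreme, uniformly in $n$ (and in $k$ for the first construction). I would carry this out by presenting each $P$ as an explicit intersection of halfspaces, so that membership reduces to checking finitely many inequalities and each claimed vertex is certified by a supporting hyperplane, exactly as in the $k=1$ case. Together the two constructions yield both asserted lower bounds.
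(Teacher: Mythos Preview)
Your constructions are correct and both lower bounds follow from them, but the approach differs from the paper's. For the first inequality the paper takes the prism $P_{n-1}\times[0,1]$ over the opposite-cubes polytope in $\R^{n-1}$ and attaches a one-dimensional spike $\{0\}^{n-1}\times[-1,k]$; for the second it reuses the $(n-1)$-dimensional $k=2$ polytope in the same prism-plus-spikes fashion, adding the two non-vertex points $X'\times\{-1,2\}$ above and below. Your ``pull the cubes apart along $e_1$'' polytope and your octagon barrel are genuinely different objects, but they realize the same vertex/non-vertex counts. Your sketch of the lattice-point bookkeeping for the first polytope is in fact complete: since every generator has $x_j\in[-1,1]$ for $j\ge 2$, any integer point of $P$ with some $x_j=1$ lies on the face $[0,1]^n\cap\{x_j=1\}$, any with some $x_j=-1$ lies on the shifted-cube face, and any with all $x_j=0$ ($j\ge 2$) lies on the $e_1$-axis; this already forces $P\cap\Z^n$ to be exactly the two cubes together with the axis points. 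The trade-off is that the paper's inductive prism construction makes the lattice-point count immediate (one literally reads it off from the $(n-1)$-dimensional case and a segment), whereas your direct constructions are more explicit but require the fibering argument you outline---projecting onto the last $n-2$ coordinates for the barrel, and onto $x_2,\dots,x_n$ for the first polytope---to certify that no extraneous lattice points appear.
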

\begin{proof}
	It is straightforward to check that the integral polytope 
	\[
		P_n :=\conv \bigl( [-1,0]^n \cup [0,1]^n \bigr)
	\]
	satisfies $\nverts{P_n} = 2^{n+1} -2 $ and $\Z^n \cap P_n \setminus \vertices(P_n) = \{0\}$. This verifies $g(\Z^n,k) \ge 2^{n+1} - 2$ for $k=1$ and every $n \in \N$. Consider $k \ge 2$. If $n=1$, the assertion is trivial. So, let $n \ge 2$. The polytope $P_{n-1}$ generates the prism $P_{n-1} \times [0,1]$ which has exactly two non-vertex integer points, namely $0$ and $u:=(0,\ldots,0,1)$. Adding the integer point $-u$ below $0$ and the integer points $2 \cdot u,\ldots, k \cdot u$ above $u$ we generate the integral polytope 
	\[
		P = \conv \bigl( (P_{n-1} \times [0,1]) \cup (\{0\}^{n-1} \times [-1,k]) \bigr)
	\]
	with $\nverts{P} = 2 (2^n - 2) + 2 =  2^{n+1} - 2$ and $k$ non-vertex integer points $0 \cdot u,\ldots, (k-1) \cdot u$. This concludes the proof of $g(\Z^n,k) \ge 2^{n+1} - 2$ for any $k\in\N$.
	
	For bounding $g(\Z^n,4)$ we can now rely on the existence of a polytope $P' \in \cP(\Z^{n-1})$ with $\nverts{P'} = 2^n - 2$ and with $X' := \Z^{n-1} \cap P'\setminus \vertices(P')$ satisfying $|X'| =2$ (just use the above considerations for $k=2$). Similarly to the previous construction, we build the prism $P'\times [0,1]$ and add the points of $X' \times \{-1\}$ below $X' \times \{0\}$ and the points of $X'\times \{2\}$ above $X'\times \{1\}$. As a result, we obtain the polytope 
	\[
		P = \conv \bigl( (P' \times [0,1]) \cup (X' \times [-1,2]) \bigr) \in \cP(\Z^n)
	\]
	satisfying $\nverts{P} = 2 (2^n -2) + 4 = 2^{n+1}$ and $\card{\Z^n \cap P \setminus \vertices(P)} = 4$. 
\end{proof}

	
	

\begin{proof}[Proof of Theorem~\ref{thmSpecialValues}]
The equality $c(\Z^n,0)=2^n$ is precisely Doignon's theorem~\cite{Doignon1973}.
Since $g(\Z^n,k)$ is non-decreasing for $k \in \{0,\ldots,4\}$, we obtain, in view of \eqref{boundsCthrougG}, that $c(\Z^n,k) = g(\Z^n,k)$ holds for every $k \in \{1,\ldots,4\}$. 
Thus it suffices to determine $g(\Z^n,k)$ for $k \in \{1,\ldots,4\}$.

The desired lower bounds on $g(\Z^n,k)$ are provided by Lemma~\ref{lemGLowerBounds}. We need to verify the matching upper bounds. 
 To this end, consider an arbitrary polytope $P \in \cP(\Z^n)$ such that the set $X:=\Z^n \cap P \setminus \vertices(P)$ satisfies $\card{X} = k$. We introduce the polytope $Q:= \conv (X)$.

Let $t$ be the number of vertices of $Q$. Since $1 \le k \le 4$, one has $1 \le t \le 4$. If $t \le 3$, then $\dim(Q) \le 2$ and Lemma~\ref{lemUpperBoundGeneralDataQ} yields $\nverts{P} \le 2^{n+1} - 2$. For $k \le 3$ one has $t \le 3$, so that the latter considerations already imply $g(\Z^n,k) \le 2^{n+1} - 2$.

To get the desired upper bound for $k=4$, the further possible value $t=4$ needs to be addressed. For $t=4$, the polytope $Q$ is either a quadrilateral or a tetrahedron. If $Q$ is a quadrilateral, Lemma~\ref{lemUpperBoundGeneralDataQ} yields $\nverts{P} \le 2^{n+1}$. 
 
 For the rest of the proof, let $Q$ be a tetrahedron. Observe that $\Z^n \cap Q = X$. We use partitioning of $\Z^n$ into $2^n$ residue classes modulo $2$.  Two points $x \in \Z^n$ and $y \in\Z^n$ are said to be in the same residue class modulo $2$ if $x-y\in 2\Z^n$. Indexing the residue classes by $i \in [2^n]$, we denote by $V_i$ the set of all vertices of $P$ that fall into the $i$-th class. 
 
 With each $V_i$ we associate the set $M_i := \setcond{(v+w)/2}{v, w \in V_i, \ v \ne w}$ consisting of all midpoints between pairs of distinct points in $V_i$. By the choice of $V_i$, one has $M_i \subseteq X$.  In what follows, we bound the cardinalities of the sets $V_i$ to get the desired bound on $\nverts{P}$. 
 
First of all, every $V_i$ contains at most three points, for otherwise $M_i$ would have cardinality at least five, which is a contradiction to $|X| = k = 4$. 
 
 If the elements of $V_i$ are congruent modulo $2$ to some vertex of $Q$, we even get $|V_i| \le 1$. In fact, assume that $V_i$  contains two distinct points $u,v$ and let $w$ be the vertex of $Q$ belonging to the same residue class as $u$ and $v$. Then, the convex hull of $u, v$ and $w$ is either a line segment containing at least three integer points in its relative interior, or it is a triangle whose vertex~$u$ and the three midpoints of the edges are integer points. Thus, we find either three non-vertex integer points on a line or four non-vertex integer points in a two-dimensional affine space. Both cases contradict the properties of~$Q$.
 
 Finally, observe that there are at most four sets $V_i$ with exactly three points. In fact, if~$V_i$ consists of exactly three points, then $\conv(V_i)$ is a triangle with integer vertices. This implies that $\conv(M_i)$ is a triangle with integer vertices, too. Since $M_i \subseteq X$, we conclude that $\conv(M_i)$ is a facet of $Q$. Taking into account that a triangle is uniquely determined by its edge midpoints and that $Q$ has four facets, we get that there are at most four sets $V_i$ with $|V_i| = 3$. 
 
Summarizing, we obtain that four of the sets $V_{i}$ have cardinality at most $1$, at most four of the sets~$V_i$ have cardinality $3$ and all remaining sets have cardinality $2$. This yields
 \[
	 \nverts{P} = \sum_{i=1}^{2^n} \card{V_i} \le 4 \cdot 1 + 4 \cdot 3 + (2^n - 8) \cdot 2 = 2^{n+1}
 \]
 and concludes the proof. 
\end{proof}

\appendix
\section{Maximal convex sets with \texorpdfstring{$k$}{k} points of \texorpdfstring{$S$}{S} in the interior}
\label{sectMaximalFree}

\newcommand{\cM}{\mathcal{M}}

This section presents another interpretation of $c(S,k)$ in the case that $S \subseteq \R^n$ is discrete and $k \in \N_0$ is arbitrary.  We introduce the family $\cM(S,k)$ of all inclusion-maximal $n$-dimensional convex sets with precisely $k$ points of $S$ in the interior. More formally, $M \in \cM(S,k)$ if and only if $M$ is an $n$-dimensional convex set with $\card{\interior(M) \cap S}=k$ such that for every convex set $M'$ satisfying $M \subseteq M'$ and $\interior(M') \cap S = \interior(M) \cap S$ one necessarily has $M = M'$.

In optimization, sets $M \in \cM(S,0)$ are called \emph{maximal $S$-free}; see \cite{Averkov13}, \cite[Sect.~2]{BasuConfortiCornuejols2010} and~\cite{MoranDey2011}.
Various families of maximal $\Z^n$-free sets have been extensively used for the generation of so-called \emph{cutting planes}; see the survey \cite{ConfortiCornuejolsZambelli2011}.
Cutting planes are employed for gradually approximating a given mixed-integer problem with linear optimization problems; they belong to the standard tools for solving general mixed-integer problems. Note that, apart from $S=\Z^n$, also other choices of $S$ such as $S= \N_0^n$ are of interest for mixed-integer optimization; see  \cite{FukasawaGuenluek2011}. The possibility of using more general sets $M \in \cM(S,k)$, where $k \in \N_0$ is arbitrary, similarly to maximal $S$-free sets is supported by the following simple observation.
If $S = \Z^n$ and it is known that a particular point $z \in \Z^n$ does not correspond to a solution of the underlying mixed-integer problem, one can use sets $M \in \cM(S,k)$ with $k=1$  
and $\interior(M) \cap \Z^n = \{z\}$ for the generation of cutting planes. Sets $M \in \cM(S,k)$ for larger parameters $k$ can be used analogously.  

From the computational point of view, the complexity of generating cutting planes from $M \in \cM(S,k)$ depends on the number $\card{\facets(M)}$ of facets of $M$ (where we interpret $\card{\facets(M)}$ as $\infty$ if $M$ is not a polyhedron). Thus, describing the maximum of $\card{\facets(M)}$ for $M \in \cM(S,k)$ is of interest. Theorem~\ref{thmMaxSets} below shows that this maximum is exactly $c(S,k)$. In the proof of this result we use the following proposition:

\begin{prop}
	\label{propEnlarging}
	Let $S \subseteq \R^n$ be discrete, $k \in \N_0$ and let $C$ be an $n$-dimensional convex set with $\card{\interior(C) \cap S}=k$. Then there exists an $M \in \cM(S,k)$ with $C \subseteq M$. 
\end{prop}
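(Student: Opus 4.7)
The plan is to apply Zorn's lemma to the family
\[
\cF := \setcond{C' \subseteq \R^n}{C \subseteq C',\ C'\ \text{convex},\ \dim C' = n,\ \card{\interior(C') \cap S} = k},
\]
ordered by inclusion. Since $C \in \cF$, the family is non-empty. I would first observe that any maximal element $M$ of $\cF$ automatically lies in $\cM(S,k)$: if $M'$ is a convex set with $M \subseteq M'$ and $\interior(M') \cap S = \interior(M) \cap S$, then $M'$ is $n$-dimensional and contains $C$, so $M' \in \cF$, and maximality forces $M' = M$.

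The only nontrivial part is verifying the chain hypothesis of Zorn's lemma. Given a chain $\{C_\alpha\}_\alpha$ in $\cF$, I would set $C^\ast := \bigcup_\alpha C_\alpha$. This $C^\ast$ is convex as an increasing union of convex sets, is $n$-dimensional, and contains $C$. The crux is to show $\card{\interior(C^\ast) \cap S} = k$, and for this I would first establish the identity
\[
	\interior(C^\ast) = \bigcup_\alpha \interior(C_\alpha).
\]
The inclusion ``$\supseteq$'' is immediate from the monotonicity of the interior on nested sets. For ``$\subseteq$'', I would invoke the standard fact that an interior point $x$ of an $n$-dimensional convex set lies in the interior of some $n$-dimensional simplex with vertices in that set; since the chain $\{C_\alpha\}$ is totally ordered and the simplex has only $n+1$ vertices, those vertices all belong to a common $C_\alpha$, placing the whole simplex, and hence $x$, in $\interior(C_\alpha)$.

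Once this identity is available, the sets $\interior(C_\alpha) \cap S$ form an increasing chain of subsets of $S$ all of finite cardinality $k$; such a chain is necessarily constant, so $\interior(C^\ast) \cap S$ has cardinality exactly $k$ as well. Hence $C^\ast \in \cF$ and serves as an upper bound for the chain, Zorn's lemma applies, and a maximal element $M$ yields the desired member of $\cM(S,k)$ containing $C$. The main obstacle is the interior-of-the-union identity: the simplex argument sketched above is the key step, and it crucially uses the $n$-dimensionality condition built into $\cF$.
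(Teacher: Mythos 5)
Your proof is correct and follows exactly the route the paper indicates: the paper's proof consists of the single remark that the assertion ``can be derived by a direct application of Zorn's lemma'' (citing the $k=0$ case in earlier work), and your argument is a complete, careful instantiation of that application. In particular, your verification of the chain condition via the identity $\interior\bigl(\bigcup_\alpha C_\alpha\bigr)=\bigcup_\alpha \interior(C_\alpha)$ (using a full-dimensional simplex around an interior point and the finiteness of its vertex set) and the observation that a nested chain of $k$-element sets is constant supply precisely the details the paper leaves to the reader.
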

\begin{proof}
	The assertion can be derived by a direct application of Zorn's lemma. For $k=0$ this was observed in \cite[Sect.~2]{BasuConfortiCornuejols2010} and \cite[Sect.~3]{MoranDey2011}. 
\end{proof}

The following is a somewhat stronger version of Theorem~\ref{thmMaximalSets} from the introduction.

\begin{thm} \label{thmMaxSets}
	Let $S \subseteq \R^n$ be discrete and let $k \in \N_0$. Then
	\[
		c(S,k) = \max \setcond{\card{\facets(M)}}{M \in \cM(S,k)},
	\]
	where we interpret $\card{\facets(M)}$ as $\infty$ if $M$ is not a polyhedron. 
\end{thm}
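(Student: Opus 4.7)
The plan is to prove both inequalities in the identity $c(S,k) = \max \{\card{\facets(M)} : M \in \cM(S,k)\}$.

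For the bound $c(S,k) \ge \card{\facets(M)}$ when $M \in \cM(S,k)$ is a polyhedron with $t$ facets: let $H_1, \ldots, H_t$ be the facet-defining closed halfspaces of $M$ and set $U_i := \interior(H_i)$. As $M$ is $n$-dimensional, $\bigcap_{i=1}^t U_i = \interior(M)$, so $\card{\bigcap_i U_i \cap S} = k$. For each $i$, the polyhedron $\bigcap_{j \ne i} H_j$ strictly contains $M$, and thus by the maximality of $M$ its interior contains strictly more than $k$ points of $S$, giving $\card{\bigcap_{j \ne i} U_j \cap S} > k$. The family $\{U_1, \ldots, U_t\}$ therefore witnesses $c(S,k) \ge t$ via the convex-set definition of $c(S,k)$.

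For the bound $\max_{M \in \cM(S,k)} \card{\facets(M)} \ge c(S,k)$, I would invoke Corollary~\ref{corStrictlyConvex} to obtain a strictly convex body $K$ with $\card{\interior(K) \cap S} = k$ and $\bd(K) \cap S = \{p_1, \ldots, p_t\}$, where $t := c(S,k)$. By Proposition~\ref{propEnlarging}, extend $K$ to some $M \in \cM(S,k)$ with $K \subseteq M$. Since $\interior(K)$ is open in $\R^n$ and contained in $M$, we have $\interior(K) \subseteq \interior(M)$, and because both sides meet $S$ in exactly $k$ points, these intersections coincide. In particular, each $p_i \in K \subseteq M$ fails to be in $\interior(M) \cap S$ and thus lies in $\bd(M)$. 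The crucial claim is that the $p_i$ lie on pairwise distinct facets of $M$: if $p_i, p_j$ with $i \ne j$ were in a common facet of $M$ with outward normal $u$, then the supporting hyperplane $\{x : \sprod{u}{x} = \max_{y \in M} \sprod{u}{y}\}$ of $M$ would contain both $p_i$ and $p_j$, and since $K \subseteq M$ and $p_i, p_j \in K$, this hyperplane would also support $K$ and touch it in at least two distinct boundary points, contradicting the strict convexity of $K$. Hence $M$ has at least $t$ distinct facets whenever it is polyhedral.

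For a non-polyhedral $M \in \cM(S,k)$ (where by convention $\card{\facets(M)} = \infty$), the bound $\max \ge c(S,k)$ is immediate, and it remains to show $c(S,k) = \infty$ for the first direction in this case. The main obstacle is to produce, for every $\tau$, a family of $\tau$ supporting halfspaces of $M$ whose induced open halfspaces serve as a witness as in the polyhedral argument. I would construct such a family by greedy augmentation---iteratively adjoining a supporting halfspace that strictly shrinks the running intersection, which is always possible because $M$ is not the intersection of finitely many halfspaces---together with periodic pruning to discard redundant members. Once such a family of size $\tau$ is in hand, the open-halfspace argument of the first paragraph yields $c(S,k) \ge \tau$, and letting $\tau \to \infty$ gives $c(S,k) = \infty$, as required.
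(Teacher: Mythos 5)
Your two inequalities in the case where $M$ is a polyhedron are correct and take a genuinely different route from the paper: for $c(S,k)\ge\card{\facets(M)}$ you use the irredundant facet-defining halfspaces of $M$ directly as the witness family (the paper instead deduces polyhedrality and the facet bound simultaneously from an approximation argument), and for the reverse inequality you start from the strictly convex body of Corollary~\ref{corStrictlyConvex} rather than from a perturbed polytope built from \eqref{eqPolytInterp}; your strict-convexity argument showing that the $t$ boundary points land on pairwise distinct facets of $M$ is clean and valid.

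The genuine gap is the non-polyhedral case. If $M\in\mathcal{M}(S,k)$ is not a polyhedron, your plan is to assemble, for each $\tau$, a family of $\tau$ supporting halfspaces of $M$ whose open versions witness $c(S,k)\ge\tau$. This cannot work: any finite intersection $\bigcap_{i=1}^\tau H_i$ of supporting halfspaces of $M$ is a polyhedron containing $M$, hence strictly contains $M$ (because $M$ is not a polyhedron), and by the maximality of $M$ its interior must then contain strictly more than $k$ points of $S$. Since $\bigcap_{i=1}^\tau\interior(H_i)=\interior\bigl(\bigcap_{i=1}^\tau H_i\bigr)$ for a full-dimensional intersection, the proposed family never has exactly $k$ common points of $S$, so it is not a witness in the sense of Definition~\ref{defQuantHelly}, no matter how the greedy augmentation and pruning are arranged. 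The statement you need (a non-polyhedral $M$ forces $c(S,k)=\infty$) is true, but the argument has to run in the opposite direction, as in the paper: assume $c(S,k)<\infty$, exhaust $\interior(M)$ by polytopes $P_t$ with $\interior(P_t)\cap S=\interior(M)\cap S$, use Lemma~\ref{lemRestrictionToClosedHalfSpaces} to replace each $P_t$ by a polyhedron $Q_t\supseteq P_t$ with at most $c(S,k)$ facets and the same interior points of $S$, and pass to a limit by a compactness argument on the facet normals to obtain a polyhedron $Q\supseteq M$ with at most $c(S,k)$ facets and $\interior(Q)\cap S=\interior(M)\cap S$; maximality then forces $M=Q$. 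Without some argument of this kind, your first direction is incomplete.
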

\begin{proof}
	Let $m:= \max \setcond{\card{\facets(M)}}{M \in \cM(S,k)}$. We need to verify $c(S,k) = m$. If $c(S,k) = - \infty$, there are no convex sets that contain exactly $k$ points of $S$. This implies $\cM(S,k) = \emptyset$ so that both $c(S,k)$ and $m$ are equal to $-\infty$. Let now $c(S,k) > -\infty$.
	
	We verify $c(S,k) \ge m$. This inequality is trivial if $c(S,k) = \infty$ or $m = -\infty$. So, assume that $c(S,k)$ is finite and $m > - \infty$. Consider an arbitrary $M \in \cM(S,k)$. We show $\card{\facets(M)} \le c(S,k)$ using a  straightforward adaption of the argument in \cite[Lem.~4.1]{Averkov13} from the case $k=0$ to the case of an arbitrary $k\in\N_0$. Let $X:=\interior(M) \cap S$. There exist  $n$-dimensional polytopes $P_t$ with $t \in \N$ such that $P_t \subseteq P_{t+1}$ and $\interior(P_t) \cap S = X$ for every $t \in \N$ and $\bigcup_{t=1}^{\infty} P_t = \interior(M)$.
	Since $P_t$ is the intersection of finitely many closed halfspaces, by Lemma~\ref{lemRestrictionToClosedHalfSpaces} there exists a polyhedron $Q_t$ with at most $c(S,k)$ facets satisfying $P_t \subseteq Q_t$ and $\interior(Q_t) \cap S = X$. For $t \rightarrow \infty$, the compactness argument from \cite[Proof of Lem.~4.1]{Averkov13} applies 
without any changes: The polyhedra $Q_t$, all having at 
most $c(S,k)$ facets, generate a polyhedron $Q$ with at most $c(S,k)$  facets that satisfies $\interior(Q) \cap S = X$ and $M \subseteq Q$. Since~$M$ is maximal, one must have $M = Q$. That is, $M$ is a polyhedron. 
	
	It remains to verify $c(S,k) \le m$. 
	This is trivial if $m=\infty$.
	So, we assume that $m<\infty$ which means in particular that every set in $\cM(S,k)$ is a polyhedron.
	In view of \eqref{eqPolytInterp}, it suffices to show that $t:=\card{S \cap P} -k \le m$ for every $P$ as in \eqref{eqPolytInterp}. Clearly, $0 \le t \le \nverts{P}$. We fix arbitrary vertices $v_1,\ldots,v_t \in S$ of $P$. Since $S$ is discrete, we can enclose $P$ into an $n$-dimensional polytope $Q$ such that $P \cap S = Q \cap S$, $\bd(Q) \cap S = \{v_1,\ldots,v_t\}$ and $v_1,\ldots,v_t$ lie in the relative interior of pairwise distinct facets $F_1,\ldots,F_t$ of $Q$. By construction, $\interior(Q) \cap S = \card{S \cap P} - t = k$. Proposition~\ref{propEnlarging} yields the existence of $M \in \cM(S,k)$ with $Q \subseteq M$. Since $v_1,\ldots,v_t \in S$ lie in the relative interior of pairwise distinct facets $F_1,\ldots,F_t$ of $Q$, these facets are subsets of pairwise distinct facets of $M$. This shows that~$M$ has at least $t$ facets and concludes the proof of $c(S,k) \le m$. 
\end{proof}

Since the choice $S = \Z^n$ is of particular interest, we conclude the section by describing the geometry of the sets $M \in \cM(\Z^n,k)$ more precisely. In view of Theorem~\ref{thmMaxSets} and the fact that $c(\Z^n,k) < \infty$, each $M \in \cM(\Z^n,k)$ is a polyhedron. 

A geometric characterization of the polyhedra in $\cM(\Z^n,0)$ was presented by Lov\'asz in~\cite{Lovasz1989}; see also \cite{AverkovAProof2013,BasuConfortiCornuejols2010} for proofs of this characterization. In particular, it is known that the unbounded polyhedra in $\cM(\Z^n,0)$ can be described through the bounded polyhedra in $\cM(\Z^i,0)$ with $1 \le i < n$. Furthermore, the bounded polyhedra in $\cM(\Z^n,0)$ are precisely those $n$-dimensional polytopes that have no interior integer point and at least one integer point in the relative interior of each facet.  It turns out that for $k \ge 1$, all elements of $\cM(\Z^n,k)$ are polytopes. We show the above description of polytopes in $\cM(\Z^n,0)$ can be carried over to $\cM(\Z^n,k)$ without any changes.

\begin{thm}
	Let $k \in \N$. Then $\cM(\Z^n,k)$ is the set of all $n$-dimensional polytopes $P$ with precisely $k$ interior integer points such that the relative interior of every facet of $P$ contains a point of $\Z^n$. 
\end{thm}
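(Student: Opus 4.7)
The plan is to prove both inclusions of the stated equality; the main obstacle is establishing that every $M\in\cM(\Z^n,k)$ is bounded when $k\ge1$.

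For the reverse inclusion, let $P$ be an $n$-dimensional polytope with exactly $k$ interior lattice points such that every facet of $P$ has a lattice point in its relative interior. Assume toward contradiction that a convex $M'\supsetneq P$ satisfies $\interior(M')\cap\Z^n=\interior(P)\cap\Z^n$. Picking $y\in M'\setminus P$ and a facet $F$ of $P$ whose defining inequality $\langle a,x\rangle\le\beta$ is violated at $y$, take $q\in\relint(F)\cap\Z^n$. Along the line $\gamma(t)=(1-t)y+tq$, for $t$ slightly exceeding $1$ all facet inequalities of $P$ become strict at $\gamma(t)$, so there is some $p:=\gamma(t^*)\in\interior(P)\subseteq\interior(M')$ with $t^*>1$. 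Writing $q=\tfrac{1}{t^*}p+\tfrac{t^*-1}{t^*}y$ as a strict convex combination of the interior point $p$ and the point $y\in M'$ places $q$ in $\interior(M')$ by a standard convexity argument; but $q\in\bd(P)\cap\Z^n$ cannot belong to $\interior(P)\cap\Z^n=\interior(M')\cap\Z^n$, a contradiction.

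For the forward inclusion, take $M\in\cM(\Z^n,k)$. Since $c(\Z^n,k)<\infty$, Theorem~\ref{thmMaxSets} implies $M$ is a polyhedron. To show $M$ is bounded, suppose $\operatorname{rec}(M)\ni r\ne0$ and pick an interior lattice point $z$ (nonempty since $k\ge1$) with $B(z,\eps)\subseteq\interior(M)$; the recession property yields $z+B(0,\eps)+\R_{\ge0}r\subseteq\interior(M)$, and I would show this open half-tube contains infinitely many lattice points. Consider the orthogonal projection $\pi\colon\R^n\to r^\perp$: either $\R r$ contains a primitive integer vector $v$, in which case $z,z+v,z+2v,\dotsc$ are already interior lattice points; or $\R r\cap\Z^n=\{0\}$, so $\pi|_{\Z^n}$ is injective and its image is a rank-$n$ subgroup of the $(n-1)$-dimensional space $r^\perp$, hence non-discrete and accumulating at $0$. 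Thus $\pi^{-1}(B(0,\eps))\cap\Z^n$ is infinite, and the involution $w\mapsto-w$ preserves this symmetric set while swapping its halves $\{\langle w,r\rangle\ge0\}$ and $\{\langle w,r\rangle\le0\}$ (which share only $\{0\}$ for $\eps$ sufficiently small), so both halves are infinite. This produces infinitely many interior lattice points in $M$, contradicting $k<\infty$.

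With $M$ now an $n$-dimensional polytope, the facet condition follows from a push-the-facet argument: if some facet $F$ of $M$ defined by $\langle a,x\rangle\le\beta$ contained no lattice point in $\relint(F)$, then the polytope $M_\eps$ obtained by replacing this inequality by $\langle a,x\rangle\le\beta+\eps$ strictly contains $M$ while having the same recession cone, hence is bounded. The only possible new interior lattice points come from $\relint(F)\cap\Z^n$ (empty by assumption) or from the open slab $\beta<\langle a,x\rangle<\beta+\eps$; the latter, intersected with the bounded polytope $M_1$ (say), forms a finite set of lattice points each with a strictly positive defect $\langle a,x\rangle-\beta$. Choosing $\eps$ less than both $1$ and the minimum defect yields $\interior(M_\eps)\cap\Z^n=\interior(M)\cap\Z^n$, contradicting the maximality of $M$.
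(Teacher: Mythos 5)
Your proof is correct and, apart from the overall skeleton (polyhedrality from Theorem~\ref{thmMaxSets}, then boundedness, then the facet condition, plus the converse inclusion), it takes a genuinely different route at the key step. The paper proves boundedness with geometry of numbers: it symmetrizes a segment along the recession direction plus a small ball, scales by $1/k$, applies Minkowski's first theorem to get a nonzero lattice vector $z$ with $kz\in\interior(M)$, and thus exhibits $k+1$ collinear interior lattice points $0,z,\dots,kz$. You instead show that the half-tube $z+B(0,\eps)+\R_{\ge 0}r$ contains infinitely many lattice points via the dichotomy on $\R r\cap\Z^n$ and the non-discreteness of a rank-$n$ subgroup of $r^\perp\cong\R^{n-1}$; this is more elementary (no Minkowski) at the cost of being longer, and it yields ``infinitely many'' rather than the sharper ``$k+1$'' --- either suffices. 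You also write out in full the two parts the paper dispatches by reference to an earlier work on the case $k=0$: the facet-pushing argument (choosing $\eps$ below the minimum defect of the finitely many lattice points of $M_1$ beyond the facet hyperplane) and the maximality of polytopes with a lattice point in the relative interior of every facet (via the strict-convex-combination trick placing $q$ in $\interior(M')$). Both are carried out correctly. One trivial point to fix: in your Case~1 you should take the primitive vector $v\in\R r\cap\Z^n$ to be a \emph{positive} multiple of $r$ (replacing $v$ by $-v$ if necessary), since otherwise $z+jv$ need not lie along the recession ray.
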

\begin{proof}
	In the proof we adapt the arguments from \cite{AverkovAProof2013}. Let $M \in \cM(\Z^n,k)$. Without loss of generality assume that $0 \in \interior(M)$. 
	
	We first verify the boundedness of $M$ by adapting an argument from \cite[Lem.~4]{AverkovAProof2013}. Assume that $M$ is not bounded. Then there exists a nonzero vector $u$ such that $ \alpha u \in \interior(M)$ for every $\alpha \ge 0$.
	We consider a ball $B$ with center at the origin and such that $B \subseteq \interior(M)$. If $\lambda \ge 0$ is large enough, the $0$-symmetric convex body $K:=\frac{1}{k} (\lambda [-u,u] + B)$ has volume larger than $2^n$. It follows, by Minkowski's first theorem (cf.~\cite[Sect.~22]{Gruber2007}), that $\interior(K)$ contains a nonzero integer vector $z$. Hence $k z \in \interior(\lambda [-u,u] + B)$. After possibly replacing $z$ by $-z$, we can assume $kz \in \interior(\lambda [0,u] + B) \subseteq \interior(M)$. Thus, $0 z,\ldots, k z$ are $k+1$ integer points in $\interior(M)$, which is a contradiction to the choice of $M$. 
	
	Once the boundedness of $M$ is established, the rest of the assertion follows by generalizing the argument from \cite[Proof of Thm.~1]{AverkovAProof2013} from the case $k=0$ to the case of an arbitrary $k$ in a straightforward manner. 
\end{proof}

\section{An explicit constant for Andrews' Theorem}
 \label{secAppendixA}
\noindent
Let $ n \in \N $ with $ n \ge 2 $.
Following \citeauthor{Andrews63}' proof and notation, it is shown in~\cite{Andrews63} that for every $ n $-dimensional
polytope $ P \in \cP(\Z^n) $ one has
\[
    \vol(P) \ge \kappa(n) \nverts{P}^{\frac{n+1}{n-1}},
\]
where
\begin{align*}
    \kappa(n) & = \frac{1}{2} \cdotp 3^{-n} \gamma(n) \left( \kappa'(n) \right)^{\frac{n}{n-1}}, & & (\text{p.}~278\text{ in Thm.})\\
    \gamma(n) & = n^{-n} \cdotp c_1(n), & & (\text{p.}~275\text{ in Lem.~7})\\
    \kappa'(n) & = \left( n! \right)^{-\frac{n-1}{n}} \cdotp \left( \xi(n) \right)^{-\frac{n-1}{n}} \cdotp
        \left( \frac{2 \pi^{n/2}}{\Gamma(n/2)} \right)^{-\frac{1}{n}}, & & (\text{p.}~278\text{ in Thm.})\\
    c_1(n) & = \frac{1}{n!} \sqrt{n + 1} \left( \frac{(n-2)!}{\sqrt{n}} \right)^{\frac{n}{n-1}}, & & (\text{p.}~275\text{ in Lem.~7})\\
    \xi(n) & = (n-1)^{2n} \left( \frac{n!}{n^{2n}} \right)^{\frac{1}{n-1}} \left( (n-1)! \right)^{\frac{1}{n-1}}, & & (\text{p.}~274\text{ in Lem.~6})
\end{align*}
and $ \Gamma(\cdot) $ is the Gamma-function; the information in the parentheses gives the exact place where the reader can find the respective constant in~\cite{Andrews63}.
Thus, the constant $ \alpha(n) $ in Theorem~\ref{thmAndrews} can be chosen as any number at least
\begin{equation}
    \label{eqUpperBoundOnAlpha}
    \left( \kappa(n) \right)^{-\frac{n-1}{n+1}}
    = \frac{ 2^{\frac{n-1}{n+1}} (3n)^{\frac{n(n-1)}{n+1}} }{ \left( c_1(n) \right)^{\frac{n-1}{n+1}} \left( \kappa'(n)
    \right)^{\frac{n}{n+1}} }.
\end{equation}
We give an upper bound on this value by using the following simple estimations.
Since
\[
    \xi(n) \le n^{2n} \cdotp \left( \frac{n^n}{n^{2n}} \right)^{\frac{1}{n}} \cdotp \left( (n-1)^{n-1}
    \right)^{\frac{1}{n-1}} =n^{2n} \cdotp \frac{1}{n} \cdotp (n-1) \le n^{2n},
\]
we obtain
\begin{equation}
    \label{eqUpperBoundOnKappaPrime}
    \kappa'(n) = \frac{1}{\left( n! \cdotp \xi(n) \right)^{\frac{n-1}{n}}} \cdotp
    \frac{(\Gamma(n/2))^{\frac{1}{n}}}{\sqrt[n]{2} \sqrt{\pi}}
    \ge \frac{\frac{1}{2}}{n! \cdotp \xi(n) \cdotp 4}
    \ge \frac{1}{8n^{3n}}.
\end{equation}
Furthermore, we clearly have
\begin{equation}
    \label{eqUpperBoundOnCOne}
    c_1(n) \ge \frac{1}{n!} \sqrt{n + 1} \frac{(n-2)!}{\sqrt{n}} \ge \frac{1}{n^2}.
\end{equation}
Thus, plugging~\eqref{eqUpperBoundOnKappaPrime} and~\eqref{eqUpperBoundOnCOne} into~\eqref{eqUpperBoundOnAlpha}, we
established
\begin{align*}
    \left( \kappa(n) \right)^{-\frac{n-1}{n+1}} \le \frac{2^{\frac{n-1}{n+1}}
    (3n)^{\frac{n(n-1)}{n+1}}}{\left(\frac{1}{n^2}\right)^{\frac{n-1}{n+1}}
    \cdotp \left( \frac{1}{8n^{3n}} \right)^{\frac{n}{n+1}}}
    \le 2 \cdotp (3n)^n \cdotp n^2 \cdotp 8n^{3n} \le (3n)^{4n}.
\end{align*}

\vspace{1em}
\noindent\textit{\bf Acknowledgements.} We thank Imre B\'{a}r\'{a}ny, Daniel Dadush, Jes\'{u}s A.~De Loera, and the authors of~\cite{ChestnutHZ15} for sharing their expertise. The second author would like to thank Martin Henk
for his invitation to the Technische Universit\"at Berlin in November 2015, where fruitful discussions
on the topic of this paper were carried out together with the third author.

\bibliographystyle{plainnat}
\bibliography{references}

\end{document}